\newcommand{\defi}[1]{{\upshape\sffamily #1}}
\renewcommand{\a}{\alpha}
\renewcommand{\b}{\beta}
\renewcommand{\c}{\gamma}
\renewcommand{\d}{\delta}
\newcommand{\D}{\mathcal{D}}
\newcommand{\bw}{\bigwedge}
\renewcommand{\det}{\textrm{det}}
\newcommand{\G}{\Gamma}
\renewcommand{\ll}{\lambda}
\newcommand{\onto}{\twoheadrightarrow}
\newcommand{\oo}{\otimes}
\renewcommand{\P}{\mathcal{P}}
\newcommand{\s}{\sigma}
\renewcommand{\S}{\mathcal{S}}
\renewcommand{\t}{\underline{t}}
\newcommand{\V}{\mathcal{V}}
\newcommand{\x}{\underline{x}}
\newcommand{\y}{\underline{y}}
\newcommand{\z}{\underline{z}}
\newcommand{\Ext}{\operatorname{Ext}}
\newcommand{\GL}{\operatorname{GL}}
\newcommand{\Sym}{\operatorname{Sym}}
\newcommand{\bb}[1]{\mathbb{#1}}
\renewcommand{\rm}[1]{\textrm{#1}}
\newcommand{\mc}[1]{\mathcal{#1}}
\newcommand{\mf}[1]{\mathfrak{#1}}
\newcommand{\tl}[1]{\tilde{#1}}
\newcommand{\ul}[1]{\underline{#1}}
\newcommand{\scpr}[2]{\left\langle #1,#2 \right\rangle}
\newtheorem{theorem}{Theorem}[section]
\newtheorem*{theorem*}{Theorem}
\newtheorem{lemma}[theorem]{Lemma}
\newtheorem{proposition}[theorem]{Proposition}
\newtheorem*{corollary*}{Corollary}
\newtheorem*{charext*}{Theorem on the Characters of Ext Modules}
\newtheorem*{ext*}{Theorem on the Growth of Ext Modules}
\newtheorem*{main*}{Main Theorem}
\newtheorem*{regularity*}{Theorem on Regularity of Equivariant Ideals}
\newtheorem*{nonvanishing*}{Theorem on Non-vanishing of Local Cohomology with Determinantal Support}
\theoremstyle{definition}
\newtheorem*{definition*}{Definition}
\newtheorem{example}[theorem]{Example}
\theoremstyle{remark}
\newtheorem{remark}[theorem]{Remark}
\newtheorem*{remark*}{Remark}
\numberwithin{equation}{section}
\begin{document}

\title{Local cohomology with support in ideals of symmetric minors and Pfaffians}

\author{Claudiu Raicu}
\address{Department of Mathematics, University of Notre Dame, 255 Hurley, Notre Dame, IN 46556\newline
\indent Institute of Mathematics ``Simion Stoilow'' of the Romanian Academy}
\email{craicu@nd.edu}

\author{Jerzy Weyman}
\address{Department of Mathematics, University of Connecticut, Storrs, CT 06269}
\email{jerzy.weyman@uconn.edu}

\subjclass[2010]{Primary 13D45, 14F10, 14M12}

\date{\today}

\keywords{Local cohomology, determinantal ideals, Pfaffians}

\begin{abstract} 
 We compute the local cohomology modules $\mc{H}_Y^{\bullet}(X,\mc{O}_X)$ in the case when $X$ is the complex vector space of $n\times n$ symmetric, respectively skew-symmetric matrices, and $Y$ is the closure of the $\GL$-orbit consisting of matrices of any fixed rank, for the natural action of the general linear group $\GL$ on $X$. We describe the $\D$-module composition factors of the local cohomology modules, and compute their multiplicities explicitly in terms of generalized binomial coefficients. One consequence of our work is a formula for the cohomological dimension of ideals of even minors of a generic symmetric matrix: in the case of odd minors, this was obtained by Barile in the 90s. Another consequence of our work is that we obtain a description of the decomposition into irreducible $\GL$-representations of the local cohomology modules (the analogous problem in the case when $X$ is the vector space of $m\times n$ matrices was treated in earlier work of the authors).
\end{abstract}

\maketitle

\section{Introduction}\label{sec:intro}

In this paper we consider $X$ to be a vector space of matrices (general, symmetric, or skew-symmetric). We let $Y_p$ denote the subvariety of general or symmetric matrices of rank at most $p$, or of skew-symmetric matrices of rank at most $2p$. We write $\D_X$ for the Weyl algebra of differential operators on $X$, and $D_p=\mc{L}(Y_p,X)$ for the intersection homology $\D_X$-module corresponding to $Y_p$. Our goal is to compute the composition factors of the local cohomology modules $\mc{H}^{\bullet}_{Y_p}(X,\mc{O}_X)$: we express, in the Grothendieck group of holonomic $\D_X$-modules, the local cohomology modules in terms of the modules $D_p$. As a corollary, we obtain descriptions of the local cohomology modules as representations of the general linear group (we will refer to these as the \defi{characters} of local cohomology modules). We have already performed the character calculation in the case of general matrices in \cite{raicu-weyman}, so here we only formulate the $\D$-module version of the main result of \cite{raicu-weyman}.

We write $\G(X)$ for the Grothendieck group of holonomic $\D_X$-modules, and let $q$ denote a formal variable. We write $[\mc{M}]$ for the class of a $\D_X$-module $\mc{M}$ in $\G(X)$ and define $H_p(q)\in\G(X)[q]$ via
\begin{equation}\label{eq:defHpq}
 H_p(q)=\sum_{j\geq 0}[\mc{H}_{Y_p}^j(X,\mc{O}_X)]\cdot q^j.
\end{equation}
For nonnegative integers $a\geq b$ we define the \defi{Gauss polynomials} (or \defi{$q$-binomial coefficients}) ${a\choose b}_q$ by
\begin{equation}\label{eq:defqbinomial}
{a\choose b}_q=\frac{(1-q^a)(1-q^{a-1})\cdots(1-q^{a-b+1})}{(1-q^b)(1-q^{b-1})\cdots(1-q)}. 
\end{equation}

\begin{main*}
 With the notation above, we have the following expressions for $H_p(q)$:
 \begin{itemize}
  \item If $X=\bb{C}^{m\times n}$ is the space of $m\times n$ matrices, $m\geq n$, then for $0\leq p<n$
  \begin{equation}\label{eq:Hpqgeneral}
   H_p(q)=\sum_{s=0}^p [D_s]\cdot q^{(n-p)^2+(n-s)\cdot(m-n)}\cdot{n-s-1\choose p-s}_{q^2}.
  \end{equation}
  \item If $X=\bw^2\bb{C}^n$ is the space of $n\times n$ skew-symmetric matrices and $m=\lfloor n/2\rfloor$, then for $0\leq p<m$
  \begin{equation}\label{eq:Hpqskew}
   H_p(q)=\begin{cases}
    \displaystyle\sum_{s=0}^p [D_s]\cdot q^{2(m-p)^2+(m-p)+2(p-s)}\cdot{m-1-s\choose p-s}_{q^4} & \rm{ if }n=2m+1\rm{ is odd;} \\ 
    \displaystyle\sum_{s=0}^p [D_s]\cdot q^{2(m-p)^2-(m-p)}\cdot{m-1-s\choose p-s}_{q^4} & \rm{ if }n=2m\rm{ is even.} \\ 
   \end{cases}
  \end{equation}
  \item If $X=\Sym^2\bb{C}^n$ is the space of $n\times n$ symmetric matrices, then for $0\leq p<n$
  \begin{equation}\label{eq:Hpqsymm}
   H_p(q)=\sum_{\substack{s=0 \\ s\equiv p\ (\rm{mod }2)}}^p [D_s]\cdot q^{1+{n-s+1\choose 2}-{p-s+2\choose 2}}\cdot{\lfloor\frac{n-s-1}{2}\rfloor\choose\frac{p-s}{2}}_{q^{-4}}.
  \end{equation}
 \end{itemize}
\end{main*}

An immediate corollary of the Main Theorem is a formula for the \defi{local cohomological dimension of~$X$ along~$Y_p$}, defined via
\[\rm{lcd}(X;Y_p)=\max\{j:\mc{H}_{Y_p}^j(X,\mc{O}_X)\neq 0\}.\]
Using (\ref{eq:defHpq}), $\rm{lcd}(X;Y_p)$ is precisely the highest exponent of $q$ appearing in the polynomial $H_p(q)$. It follows that for $X=\bb{C}^{m\times n}$, $\rm{lcd}(X;Y_p)=mn-(p+1)^2+1$ \cite{bru-sch}, and for $X=\bw^2\bb{C}^n$, $\rm{lcd}(X;Y_p)={n\choose 2}-{2p+2\choose 2}+1$ \cite[Thm.~6.1]{barile}. In the case when $X=\Sym^2\bb{C}^n$ we get
\begin{equation}\label{eq:cdimsymm}
  \rm{lcd}(X,Y_p)=\begin{cases}
1+{n+1\choose 2}-{p+2\choose 2} & \rm{if }p\rm{ is even;} \\
1+{n\choose 2}-{p+1\choose 2} & \rm{if }p\rm{ is odd.} \\
\end{cases}
\end{equation}
The formula (\ref{eq:cdimsymm}) was known in the case when $p$ is even \cite[Thm.~6.3]{barile}, but it is new in the case when $p<n-1$ is odd, as far as we are aware! We point out that the previously known cohomological dimension calculations were based on the fact that the top non-vanishing local cohomology module is supported at $Y_0=\{0\}$, in which case it can be determined via a computation of singular cohomology \cite[Thm.~3.1]{lyubeznik-singh-walther}. In contrast, in the case when $X=\Sym^2\bb{C}^n$ and $p<n-1$ is odd, the top non-vanishing local cohomology module $\mc{H}_{Y_p}^{\bullet}(X,\mc{O}_X)$ is the simple $\D$-module $D_1$, which is supported on $Y_1$. Using \cite[Thm.~3.1]{barile} we conclude that the case when $X=\Sym^2\bb{C}^n$ and $p<n-1$ is odd is also the only one where $\rm{lcd}(X;Y_p)$ differs from the \defi{arithmetic rank of $Y_p\subset X$} (i.e. from the minimal number of equations required to cut out $Y_p$ as an algebraic subset of $X$).

The formula (\ref{eq:Hpqgeneral}) is a direct consequence of \cite{raicu-weyman}, as explained in \cite[Remark~1.4]{raicu-dmods}: it is included in the Main Theorem for the sake of completeness, and will not be addressed further. The formulas (\ref{eq:Hpqskew}--\ref{eq:Hpqsymm}) give rise to expressions for the characters of the local cohomology modules with support in ideals of symmetric minors/Pfaffians, as we explain next. We let $\bb{Z}^n_{dom}$ denote the set of \defi{dominant weights} $\ll=(\ll_1\geq\cdots\geq\ll_n)\in\bb{Z}^n$, consider the sets
\[
\begin{aligned}
\mc{B}(s,2m) &= \{\ll\in\bb{Z}^{2m}_{dom}:\ll_{2s}\geq(2s-1),\ll_{2s+1}\leq 2s,\ll_{2i-1}=\ll_{2i}\rm{ for all }i\}, \\
\mc{B}(s,2m+1) &= \{\ll\in\bb{Z}^{2m+1}_{dom}:\ll_{2s+1}=2s,\ll_{2i-1}=\ll_{2i}\rm{ for }i\leq s,\ll_{2i}=\ll_{2i+1}\rm{ for }i>s\},
\end{aligned}
\]
\[
\begin{aligned}
\mc{C}^1(s,n)&=\{\ll\in\bb{Z}^{n}_{dom}:\ll_i\overset{(\rm{mod }2)}{\equiv} s+1\rm{ for }i=1,\cdots,n,\ll_{s}\geq s+1\geq\ll_{s+2}\},\\
\mc{C}^2(s,n)&=\left\{\ll\in\bb{Z}^{n}_{dom}:\ll_i\overset{(\rm{mod }2)}{\equiv}
\begin{cases}
 s+1 & \rm{for }i=1,\cdots,s\\
 s & \rm{for }i=s+1,\cdots,n
\end{cases},
\ll_{s}\geq s+1,\ll_{s+1}\leq s
\right\},
\end{aligned}
\]
and use them to define the $\GL_n(\bb{C})$-representations (where $S_{\ll}$ denotes the \defi{Schur functor} associated to $\ll$)
\begin{equation}\label{eq:defBsCsj}
 \mf{B}_s=\bigoplus_{\ll\in\mc{B}(s,n)} S_{\ll}\bb{C}^n,\ s=0,\cdots,\lfloor n/2\rfloor,\quad\rm{and}\quad\mf{C}^j_s=\bigoplus_{\ll\in\mc{C}^j(s,n)} S_{\ll}\bb{C}^n,\ s=0,\cdots,n. 
\end{equation}

The following is a direct consequence of Theorems~4.1 and~5.1 in \cite{raicu-dmods}:
\begin{theorem}\label{thm:charsofICmods}
 If $X=\bw^2\bb{C}^n$ is the vector space of $n\times n$ skew-symmetric matrices then we have the isomorphisms of $\GL_n(\bb{C})$-representations
 \[D_s\simeq\mf{B}_{m-s}\rm{ for }s=0,\cdots,m=\lfloor n/2\rfloor.\]
 If $X=\Sym^2\bb{C}^n$ is the vector space of $n\times n$ symmetric matrices then we have similarly
 \[D_s\simeq\begin{cases}
 \mf{C}_{n-s}^1 & \rm{if }n-s\rm{ is odd}; \\            
 \mf{C}_{n-s}^2 & \rm{if }n-s\rm{ is even}. \\            
\end{cases}
 \]
\end{theorem}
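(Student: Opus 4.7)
The plan is to derive the statement directly from the character formulas for simple $\GL_n(\bb{C})$-equivariant $\mc{D}$-modules on $\bw^2\bb{C}^n$ and $\Sym^2\bb{C}^n$ established in Theorems~4.1 and~5.1 of \cite{raicu-dmods}. First I would recall that, since the $\GL_n(\bb{C})$-action on $X$ has finitely many orbits (indexed by rank), each orbit closure $Y_s$ supports a unique simple equivariant $\mc{D}_X$-module, namely the intersection homology module $D_s=\mc{L}(Y_s,X)$. Hence it suffices to identify, for each $s$, which simple $\mc{D}$-module in the classification of \cite{raicu-dmods} is supported on $Y_s$, and then read off its character from the explicit multiplicity-free Schur decompositions provided there.

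Next I would match the indexing conventions between the two papers. In \cite{raicu-dmods} the simple equivariant $\mc{D}$-modules are parametrized by a ``corank''-type index that runs from $0$ (for the unique module supported on the maximal orbit) down to the module supported at the origin; in the current paper $D_s$ is parametrized by the rank of the generic matrix in the support. This accounts for the flip $D_s\simeq\mf{B}_{m-s}$ in the skew-symmetric case, and analogously $D_s\simeq\mf{C}^{j}_{n-s}$ in the symmetric case. The representations $\mf{B}_s$ and $\mf{C}^j_s$ defined in (\ref{eq:defBsCsj}) are by construction the direct sums of Schur functors indexed by the families $\mc{B}(s,n)$ and $\mc{C}^j(s,n)$, and a direct comparison of the defining dominance/parity inequalities shows that these families coincide with those appearing in the character formulas of \cite{raicu-dmods}.

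The main point that requires care is the parity dichotomy $\mf{C}^1$ versus $\mf{C}^2$ in the symmetric case: the two families correspond respectively to dominant weights all of whose coordinates have a common parity, versus those whose coordinates split into two distinct parity patterns at position $s$. That the correct family is selected by the parity of $n-s$ reflects the parity of the smallest $\GL$-invariants (the principal minors of size $s+1$) vanishing on $Y_s$, and is already built into the statement of Theorem~5.1 in \cite{raicu-dmods}. I expect this parity bookkeeping, together with the routine matching of combinatorial conventions for the weight sets, to be the main obstacle; once it is carried out, both isomorphisms in the theorem follow immediately from the cited results.
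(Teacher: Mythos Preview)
Your overall approach matches the paper's: the theorem is stated there as a direct consequence of Theorems~4.1 and~5.1 of \cite{raicu-dmods}, with no further argument. However, your first paragraph contains a genuine misstatement. You assert that ``each orbit closure $Y_s$ supports a unique simple equivariant $\mc{D}_X$-module, namely $D_s$,'' and propose to deduce the isomorphism by matching supports. This is correct for $X=\bw^2\bb{C}^n$, but false for $X=\Sym^2\bb{C}^n$: as Theorem~\ref{thm:charsDmods}(b) records, there are $2n+1$ simple equivariant holonomic $\mc{D}$-modules on $\Sym^2\bb{C}^n$, two for each $Y_s$ with $s\geq 1$ (and one for $Y_0$). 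The reason is that the stabilizer of a rank-$s$ symmetric matrix involves the orthogonal group $O(s)$, which is disconnected, so each such orbit carries a nontrivial equivariant local system and hence a second simple $\mc{D}$-module with the same support as $D_s$.

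Consequently your proposed reduction ``identify the support, then read off the character'' does not by itself single out $D_s$ among $\mf{C}^1_{n-s}$ and $\mf{C}^2_{n-s}$. One must further use that $D_s=\mc{L}(Y_s,X)$ is the simple associated to the \emph{trivial} local system, and then verify in \cite{raicu-dmods} which of the two combinatorial families this corresponds to. You do gesture at this in your final paragraph, so the argument is salvageable once the first paragraph is rewritten; but as stated, the logical skeleton is flawed precisely at the point where the symmetric case diverges from the skew-symmetric one.
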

Replacing the $\D$-modules $D_s$ in (\ref{eq:Hpqskew}) and (\ref{eq:Hpqsymm}) by the corresponding $\GL_n(\bb{C})$-representations as in Theorem~\ref{thm:charsofICmods}, we obtain a description of the characters of local cohomology modules with support in ideals of symmetric minors/Pfaffians. These formulas give the analogue in the symmetric/skew-symmetric case of the main result of \cite{raicu-weyman}. Note that the special case of (\ref{eq:Hpqskew}) when $n=2m+1$ is odd and $p=m-1$ (that of sub-maximal Pfaffians) recovers \cite[Thm.~5.5]{raicu-weyman-witt}.

\subsection*{Strategy for proving the Main Theorem}

For $X$ the space of symmetric/skew-symmetric matrices, we let $(F_i)_{i\in I}$ denote the collection of simple $\GL$-equivariant holonomic $\D_X$-modules. By \cite[Prop.~3.1.2]{VdB:loccoh}, the local cohomology modules with determinantal/Pfaffian support have a composition series whose composition factors are the modules $F_i$, each appearing with some (finite) multiplicity. It follows that for fixed $p$ there exist polynomials $P_i\in\bb{Z}[q]$, and a relation inside $\G(X)[q]$
\begin{equation}\label{eq:DmodHpqtoFi}
H_p(q)=\sum_{i\in I} [F_i]\cdot P_i(q).
\end{equation}
Using the forgetful map from $\G(X)$ to the Grothendieck group $\G(\GL)$ of admissible $\GL$-representations, we can interpret (\ref{eq:DmodHpqtoFi}) as an equality in $\G(\GL)[q]$. For each $i\in I$, it is possible to choose a \defi{witness weight} $\ll(F_i)\in\bb{Z}^n_{dom}$, with the property that $S_{\ll(F_i)}\bb{C}^n$ appears as a subrepresentation inside $F_j$ if and only if $i=j$. To determine the polynomials $P_i(q)$ it is then sufficient to determine the multiplicity of $S_{\ll(F_i)}\bb{C}^n$ inside the local cohomology modules $\mc{H}^{\bullet}_{Y_p}(X,\mc{O}_X)$.

At this point we can apply the strategy from \cites{raicu-weyman,raicu-VeroDmods}: we write an appropriate spectral sequence that computes local cohomology and in addition it is $\GL$-equivariant. The terms of the spectral sequence are computed using a Grothendieck duality argument and Bott's Theorem. When restricting the spectral sequence to the $\ll(F_i)$-isotypic component we notice that there can be no non-zero $\GL$-equivariant maps, i.e. the restriction of the spectral sequence is degenerate. This allows us to compute the multiplicities of the witness weights inside the local cohomology modules $\mc{H}^{\bullet}_{Y_p}(X,\mc{O}_X)$, and thus to determine the polynomials $P_i(q)$. We note that for skew-symmetric matrices the spectral sequence is degenerate even before the restriction to the witness weights, so the approach of \cite{raicu-weyman} goes through with little modifications. However, this is not the case for symmetric matrices: we were not able to write down a degenerate spectral sequence for computing local cohomology, so the $\D$-module insight is essential in this case. Even for skew-symmetric matrices, the $\D$-module approach offers a simplification of the arguments of \cite{raicu-weyman}, in that it is not necessary to compute all the terms in the spectral sequence, but only their isotypic components corresponding to the witness weights. We encourage the interested reader to apply the $\D$-module/witness weight arguments below in the case when $X$ is the space of general matrices (\ref{eq:Hpqgeneral}), as an alternative to the calculations in~\cite{raicu-weyman}.

\subsection*{Organization} In Section~\ref{subsec:repthybott} we fix some notation regarding the representation theory of $\GL_n(\bb{C})$, and describe a useful consequence of Bott's Theorem for Grassmannians. In Section~\ref{subsec:charDmods} we recall the character calculations for equivariant $\D$-modules on (skew-)symmetric matrices from \cite{raicu-dmods}. In Section~\ref{subsec:spseq} we set up a spectral sequence for computing local cohomology with symmetric determinantal/Pfaffian support, whose terms are $\Ext$ modules computed in Section~\ref{subsec:ext} via the Duality Theorem \cite[Thm.~3.1]{raicu-weyman-witt}. In Sections~\ref{sec:skew} and~\ref{sec:symm} we perform the local cohomology calculation following the strategy outlined above.

\section{Preliminaries}\label{sec:prelim}

\subsection{Representation theory and Bott's Theorem}\label{subsec:repthybott}

Given a complex vector space $W$ of dimension $n$, we write $\GL(W)$ for its group of invertible linear transformations. The irreducible representations of $\GL(W)$ are classified by $\bb{Z}^n_{dom}$, the set of \defi{dominant weights} $\ll=(\ll_1\geq\cdots\geq\ll_n)\in\bb{Z}^n$. We write $S_{\ll}W$ for the irreducible corresponding to $\ll$. We write $\det(W)=\bw^n W$ for the irreducible $S_{\ll}W$ corresponding to $\ll=(1,\cdots,1)=(1^n)$. The \defi{dual weight} $\ll^*$ of a weight $\ll$ is defined by $\ll^*_i=-\ll_{n+1-i}$. If we let $V=W^*$ be the dual vector space to $W$ then $S_{\ll}V=S_{\ll^*}W$. The \defi{size} of a weight $\ll$ is $|\ll|=\ll_1+\cdots+\ll_n$. A \defi{partition} is a dominant weight whose entries are non-negative integers (we often use underlined roman letters for partitions, to distinguish them from the arbitrary dominant weights, for which we use greek letters). We let $\P(k)$ denote the set of partitions $\z=(z_1\geq\cdots\geq z_k\geq 0)$ with at most $k$ parts, and whenever necessary we identify $\P(k)$ with a subset of $\P(k+1)$. We write $\P(k,n-k)$ for the subset of partitions $\z\in\P(k)$ satisfying $z_1\leq n-k$. The \defi{conjugate partition} to $\z$ is denoted by $\z'$, so that $\P(n-k,k)=\{\z':\z\in\P(k,n-k)\}$. The Gauss polynomials defined in~(\ref{eq:defqbinomial}) can be thought of as generating functions for the number of partitions of any given size in $\P(a-b,b)$:
\begin{equation}\label{eq:genfunqbin}
{a\choose b}_q=\sum_{\z\in\P(a-b,b)}q^{|\z|}. 
\end{equation}
It will be useful to note that taking the complement of a Young diagram inside the $(a-b)\times b$ rectangle establishes a bijection from $\P(a-b,b)$ to itself, so we can rewrite (\ref{eq:genfunqbin}) as
\begin{equation}\label{eq:genfunqbincomplement}
{a\choose b}_q=\sum_{\z\in\P(a-b,b)}q^{(a-b)\cdot b-|\z|}.
\end{equation}

For a partition $\z\in\P(k)$, we write $\z^{(2)}=(z_1,z_1,z_2,z_2,\cdots)\in\P(2k)$ for the partition obtained by repeating the parts of $\z$. We write $2\z=(2z_1,2z_2,\cdots)\in\P(k)$ for the one obtained by doubling the parts of $\z$. By \cite[Prop.~2.3.8]{weyman}, we have the following formulas which will be used throughout the next sections:
\begin{equation}\label{eq:cauchy}
\Sym(\Sym^2 V)=\bigoplus_{\z\in\P(n)} S_{2\z}V,\quad\Sym\left(\bw^2 V\right)=\bigoplus_{\z\in\P(\lfloor n/2\rfloor)} S_{\z^{(2)}}V. 
\end{equation}
If we write $S$ for $\Sym(\Sym^2 V)$ (resp. $\Sym\left(\bw^2 V\right)$) then for any partition $\z$ in $\P(n)$ (resp. $\P(\lfloor n/2\rfloor)$) we define
\begin{equation}\label{eq:defIz}
 I_{\z}=\rm{the ideal generated by }S_{2\z}V\ (\rm{resp. by }S_{\z^{(2)}}V).
\end{equation}
We define a partial order on the partitions by $\y\leq\z$ if and only if $y_i\leq z_i$ for all $i$. It follows from \cites{abeasis,abeasis-delfra} that $I_{\y}\subseteq I_{\z}$ if and only if $\y\geq\z$, or equivalently
\begin{equation}\label{eq:descIzs}
 I_{\z}=\bigoplus_{\y\geq\z} S_{2\y}V\rm{ when }S=\Sym(\Sym^2 V),\rm{ and }I_{\z}=\bigoplus_{\y\geq\z} S_{\y^{(2)}}V\rm{ when }S=\Sym\left(\bw^2 V\right).
\end{equation}
Analogous statements hold for general matrices (see \cite{DCEP}), and this was employed by the authors in the local cohomology calculations from \cite{raicu-weyman}.

%

If $U=\bigoplus_{\ll}(S_{\ll}W)^{\oplus a_{\ll}}$ is a representation of $\GL(W)$, we write
\begin{equation}\label{eq:defscpr}
\scpr{S_{\ll}W}{U}=a_{\ll} 
\end{equation}
for the multiplicity of the irreducible $S_{\ll}W$ inside $U$. We call the subrepresentation $(S_{\ll}W)^{\oplus a_{\ll}}$ the \defi{$\ll$-isotypic component} of $U$. For a cohomologically $\bb{Z}$-graded $\GL(W)$-representation $H^{\bullet}=\bigoplus_{j\in\bb{Z}}H^j$, and a dominant weight $\ll\in\bb{Z}^n_{dom}$, we define a generating function for the multiplicities of the $\ll$-isotypic components of $H^{\bullet}$:
\begin{equation}\label{eq:genfcngeneric}
\scpr{H^{\bullet}}{S_{\ll}W}=\sum_{j\in\bb{Z}}\scpr{H^j}{S_{\ll}W}\cdot q^j. 
\end{equation}

The following consequence of Bott's Theorem \cite[Cor.~4.1.9]{weyman} will be useful later:

\begin{theorem}\label{thm:bott}
 Consider the Grassmannian $\bb{G}=\bb{G}(k,V)$ of $k$-dimensional quotients of $V=W^*$, and let $\mc{Q}$ denote the tautological rank $k$ quotient bundle, and $\mc{R}$ the tautological rank $(n-k)$ sub-bundle.
 
 (a) If $\a\in\bb{Z}^k_{dom}$ and $\b\in\P(n-k)$ is a partition then
 \[\scpr{H^{\bullet}(\bb{G},\mc{S}_{\b}\mc{R}\oo S_{\a}\mc{Q})}{\bb{C}}=
\begin{cases}
 q^{|\b|} & \rm{if }\b\in P(n-k,k)\rm{ and }\a=(\b')^*; \\
 0 & \rm{otherwise}.
\end{cases}
 \]
 
 (b) If $\a\in\bb{Z}^k_{dom}$, $n-k\leq s\leq n$, and $\b\in\P(n-k)$ is a partition with $\b_i\geq n-s$ for all $i$, then
 \[\scpr{H^{\bullet}(\bb{G},\mc{S}_{\b}\mc{R}\oo S_{\a}\mc{Q})}{\bw^{n-s} W}=
\begin{cases}
 q^{|\b|} & \rm{if }\b\in P(n-k,k)\rm{ and }\a=(\b')^*+(0^{s-n+k},-1^{n-s}); \\
 0 & \rm{otherwise}.
\end{cases}
 \]
\end{theorem}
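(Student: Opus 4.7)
The result is a combinatorial consequence of the Borel-Weil-Bott theorem on the Grassmannian (cf.~\cite[Ch.~4]{weyman}). Observe first that part (a) is the $s=n$ specialization of (b): the hypothesis $\b_i \geq n-s$ becomes vacuous, the shift $(0^{s-n+k}, -1^{n-s})$ reduces to $(0^k)$, and $\bw^{n-s}W = \bb{C}$. So it suffices to prove (b).

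Set $\rho = (n-1, n-2, \ldots, 1, 0)$ and $\mu = (\a_1, \ldots, \a_k, \b_1, \ldots, \b_{n-k}) \in \bb{Z}^n$. Bott's theorem states that either $\mu+\rho$ has a repeated entry (and all cohomology vanishes), or there is a unique minimal-length permutation $\sigma$ sorting $\mu+\rho$ strictly decreasingly, in which case $H^{\bullet}(\bb{G}, \mc{S}_{\b}\mc{R}\oo S_{\a}\mc{Q}) = S_{\sigma(\mu+\rho) - \rho} V$, concentrated in degree $\ell(\sigma)$. Via $V=W^*$ we identify $\bw^{n-s}W = S_{(0^s, -1^{n-s})} V$, so the desired isotypic component is nonzero precisely when $\sigma(\mu+\rho)$ equals
\[
\rho + (0^s, -1^{n-s}) = (n-1, \ldots, n-s,\; n-s-2, n-s-3, \ldots, -1).
\]
Denote $U = \{n-s, \ldots, n-1\}$ (\emph{upper block}, size $s$) and $L = \{-1, 0, \ldots, n-s-2\}$ (\emph{lower block}, size $n-s$).

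The last $n-k$ coordinates of $\mu+\rho$, namely $\b_i + (n-k-i)$, are strictly decreasing, and the hypothesis $\b_i \geq n-s$ places them inside $U$ (forcing $n-k \leq s$, as assumed). The standard bijection between strictly decreasing $(n-k)$-tuples inside $U$ and partitions $\b \in \P(n-k, k)$ recovers precisely the $\b$'s appearing in the statement. The first $k$ coordinates of $\sigma(\mu+\rho)$ must then list in decreasing order the $s-n+k$ leftover elements of $U$ followed by all of $L$. Comparing with $\a_i + (n-i)$, using the complement identity that pairs $\{\b_i + (n-k-i)\}_i$ with $\{n-1-(\b')_j - (k-j)\}_j$ inside $\{0, \ldots, n-1\}$, together with the observation that $\b_i \geq n-s$ forces $(\b')_j = n-k$ for $j = 1, \ldots, n-s$ (which produces the $-1$'s in the shift via the lower block), pins down $\a$ uniquely as $(\b')^* + (0^{s-n+k}, -1^{n-s})$.

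The main technical step is the length count $\ell(\sigma) = |\b|$. Since both the $\a$-block and the $\b$-block of $\mu+\rho$ are already strictly decreasing, $\sigma$ is the unique shuffle interleaving them, and its inversions biject with pairs $(i,j)$ for which $\a_i + (n-i) < \b_j + (n-k-j)$. Via the standard correspondence between partitions inside the $(n-k) \times k$ rectangle and their associated subsets of $\{0, \ldots, n-1\}$, this inversion count equals $|\b|$, matching the generating function~(\ref{eq:genfunqbin}) and producing the claimed factor $q^{|\b|}$.
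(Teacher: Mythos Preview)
Your proof is correct and follows essentially the same route as the paper: both reduce (a) to (b), invoke Bott's theorem to identify the target weight $(0^s,-1^{n-s})$ in $V$, argue that the $\b$-block of $\mu+\rho$ must land in the upper set $U$ (forcing $\b\in\P(n-k,k)$) while the $\a$-block fills the complement in $U\cup L$, and then count inversions of the resulting shuffle as $|\b|$. The paper carries this out more explicitly---introducing the permutation $\s$ and the auxiliary partition $\ul{t}$ via (\ref{eq:ttosigma}), and checking both implications of (\ref{eq:equiv1}) separately---whereas you invoke the subset/complement correspondence for partitions directly; but the underlying combinatorics is identical.
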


\begin{proof}
 Observe first that it suffices to prove part (b): (a) is the special case when $s=n$. We write $\c=(\c_1,\cdots,\c_n)=(\a_1,\cdots,\a_k,\b_1,\cdots,\b_{n-k})$ for the concatenation of $\a$ and $\b$, let $\delta=(n-1,n-2,\cdots,0)$, and consider $\c+\delta=(\c_1+n-1,\c_2+n-2,\cdots,\c_n)$. We write $\tl{\c}=\rm{sort}(\c+\delta)$ for the sequence obtained by arranging the entries of $\c+\delta$ in non-increasing order. If we write $l$ for the number of pairs $(x,y)$ with $1\leq x<y\leq n$ and $\c_x-x<\c_y-y$, then Bott's Theorem yields
\begin{equation}\label{eq:bott}
 H^{\bullet}(\bb{G},\mc{S}_{\b}\mc{R}\oo S_{\a}\mc{Q})=\begin{cases}
 S_{\tl{\c}-\delta}V & \rm{ if }\c+\d\rm{ has distinct entries and }\bullet=l; \\
 0 & \rm{otherwise}.
\end{cases}
\end{equation}
Since $\bw^{n-s}W=S_{(0^s,-1^{n-s})}V$, in order to prove (b) we need to show the equivalence
\begin{equation}\label{eq:equiv1}
\tl{\c}-\d=(0^s,-1^{n-s})\Longleftrightarrow \b\in P(n-k,k)\rm{ and }\a=(\b')^*+(0^{s-n+k},-1^{n-s}), 
\end{equation}
and moreover if the equivalent conditions above hold, then $l=|\b|$.

Consider a permutation $\s$ of $[n]$ such that
\begin{equation}\label{eq:defsigma}
\tl{\c}_{\s(i)}=\c_i+\d_i,\rm{ for }i=1,\cdots,n, 
\end{equation}
and note that $\s$ is unique if $\c+\d$ has distinct entries. Since $\c_1\geq\cdots\geq\c_k$ and $\c_{k+1}\geq\cdots\geq\c_n$, we get that
\[\s(1)<\cdots<\s(k),\rm{ and }\s(k+1)<\cdots<\s(n).\]
Such $\s$ corresponds to a unique partition $\ul{t}\in\P(n-k,k)$ such that (recalling $\ul{t}'\in\P(k,n-k)$)
\begin{equation}\label{eq:ttosigma}
\s(i)=\begin{cases}
t'_{k+1-i}+i & \rm{for }i=1,\cdots,k; \\
i-t_{i-k} & \rm{for }i=k+1,\cdots,n.
\end{cases}
\end{equation}
Note that the condition $\c_x-x<\c_y-y$ is equivalent to $\tl{\c}_{\s(x)}<\tl{\c}_{\s(y)}$. If $\c+\d$ has non-repeated entries (i.e. $\tl{\c}$ is strictly decreasing), we get that
\begin{equation}\label{eq:countxy}
l=\#\{(x,y):x<y,\s(x)>\s(y)\}=|\ul{t}|. 
\end{equation}
We are now ready to prove the equivalence (\ref{eq:equiv1}), under the assumption in the statement of the theorem that $s\geq n-k$ and all $\b_i\geq n-s$.

$\Longleftarrow$: The condition $\b_i\geq n-s$ for all $i$ implies $\b'_1=\cdots=\b'_{n-s}=n-k$, so
\[\a=(-\b'_k,\cdots,-\b'_{n-s+1},-(n-k)-1,\cdots,-(n-k)-1).\]
If we let $k'=k-(n-s)\geq 0$ then we get
\[\c+\d=(n-1-\b'_k,\cdots,n-k'-\b'_{k+1-k'},n-s-2,n-s-3,\cdots,-1,n-k-1+\b_1,\cdots,\b_{n-k}).\]
If we take $\t=\b$ and define $\s$ via (\ref{eq:ttosigma}) then we get $\s(i)=n-k+i$ for $i=k'+1,\cdots,k$, and
\[\c+\d=(n-\s(1),\cdots,n-\s(k'),n-\s(k'+1)-1,\cdots,n-\s(k)-1,n-\s(k+1),\cdots,n-\s(n)).\]
Since $\s$ is a permutation of $[n]$, the numbers $n-\s(i)$, $i=1,\cdots,n$ form a permutation of $\{0,\cdots,n-1\}$. Moreover, the (ordered) sets $\{n-\s(k'+1),\cdots,n-\s(k)\}$ and $\{n-s-1,\cdots,0\}$ coincide. It follows that the entries of $\c+\d$ are all the integers from $-1$ to $n-1$, except for $n-s-1$, and in particular they are distinct. If we define $\tl{\c}_{\s(i)}=\c_i+\d_i=n-\s(i)$ or $n-\s(i)-1$, then it is clear that $\tl{\c}$ is non-increasing, and 
\begin{equation}\label{eq:tlc}
\tl{\c}=\rm{sort}(\c+\d)=(n-1,\cdots,n-s,n-s-2,\cdots,-1), 
\end{equation}
which yields $\tl{\c}-\d=(0^s,-1^{n-s})$, proving the implication. Furthermore, since $\s$ corresponds via (\ref{eq:ttosigma}) to $\t=\b$, it follows from (\ref{eq:countxy}) that $l=|\b|$, as desired.

$\Longrightarrow$: The condition $\tl{\c}-\d=(0^s,-1^{n-s})$ is equivalent to (\ref{eq:tlc}), which implies that $\c+\d$ has distinct entries, so there exists a unique permutation $\s$ for which (\ref{eq:defsigma}) is satisfied. Let $\t\in\P(n-k,k)$ be the partition corresponding to $\s$ as in (\ref{eq:ttosigma}). We begin by proving that $\ul{t}=\b$, which implies $\b\in\P(n-k,k)$.

Assume that $\s(i)=i-t_{i-k}>s$ for some $i=k+1,\cdots,n$. It follows from (\ref{eq:defsigma}) and (\ref{eq:tlc}) that
\begin{equation}\label{eq:btot}
\b_{i-k}+(n-i)=\c_i+\d_i=\tl{\c}_{\s(i)}=n-1-\s(i)=n-1-i+t_{i-k}, 
\end{equation}
so $\b_{i-k}=t_{i-k}-1$. We then get a contradiction since $n-s\leq\b_{i-k}=t_{i-k}-1<i-s-1$, which is equivalent to $n<i-1$. It follows that $\s(i)=i-t_{i-k}\leq s$ for all $i=k+1,\cdots,n$. A calculation similar to (\ref{eq:btot}), using the fact that $\tl{\c}_{\s(i)}=n-\s(i)$ when $\s(i)\leq s$, yields $\b_{i-k}=t_{i-k}$ for $i=k+1,\cdots,n$, i.e. $\ul{t}=\b$, as desired.

If we let $k'=k-(n-s)$, we obtain just as in the previous implication that $\s(i)=n-k+i>s$ for $i=k'+1,\cdots,k$. Note that for such $i$ we have $k+1-i\leq n-s$, so $\b'_{k+1-i}=n-k$ (recall that the condition $\b_i\geq n-s$ is equivalent to $\b'_1=\cdots=\b'_{n-s}=n-k$). It follows that for $i=k'+1,\cdots,k$,
\[\a_i=\c_i=\tl{\c}_{\s(i)}-\d_i=(n-1-\s(i))-(n-i)=i-1-\s(i)=i-1-(n-k+i)=-\b'_{k+1-i}-1.\]
This equality shows that the last $(n-s)$ entries of $\a$ and $(\b')^*$ agree, so it remains to check that $\a_i=-\b'_{k+1-i}$ for $i\leq k'$. To do so we note that $\s(i)\leq s$ for $i\leq k'$: this is because $\s$ is a permutation, and the numbers $s+1,\cdots,n$ are all of the form $\s(i)$ for some $i=k'+1,\cdots,k$. It follows that $\tl{\c}_{\s(i)}=n-\s(i)$ and thus
\[\a_i=\c_i=\tl{\c}_{\s(i)}-\d_i=(n-\s(i))-(n-i)=i-\s(i)=i-(t'_{k+1-i}+i)=-t'_{k+1-i}=-\b'_{k+1-i}.\qedhere\]
\end{proof}

\subsection{Characters of equivariant $\D$-modules and witness weights {\cite{raicu-dmods}}}\label{subsec:charDmods}

We write $W=\bb{C}^n$ and identify $\bw^2 W$ with the vector space of $n\times n$ skew-symmetric matrices, and $\Sym^2 W$ with the space of $n\times n$ symmetric matrices. Recalling the notation (\ref{eq:defBsCsj}), we get the following \cite[Theorems~4.1 and~5.1]{raicu-dmods}:

\begin{theorem}\label{thm:charsDmods}
 (a) There exist $m=\lfloor n/2\rfloor$ simple $\GL(W)$-equivariant holonomic $\D$-modules on $\bw^2 W$, whose characters are $\mf{B}_s$, $s=0,\cdots,m$. The support of the $\D$-module with character $\mf{B}_s$ consists of matrices of rank at most $m-s$.
 
 (b) There exist $2n+1$ simple $\GL(W)$-equivariant holonomic $\D$-modules on $\Sym^2 W$, whose characters are $\mf{C}_s^j$, $s=0,\cdots,n$, $j=1,2$ (note that $\mf{C}_n^1=\mf{C}_n^2$ correspond to the same $\D$-module). The support of the $\D$-module with character $\mf{C}_s^j$ consists of matrices of rank at most $n-s$.
\end{theorem}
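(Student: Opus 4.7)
The plan is to combine a Riemann--Hilbert classification of the simple equivariant $\D$-modules with an explicit character computation via Bott's Theorem and the Cauchy decomposition (\ref{eq:cauchy}) of the coordinate rings.

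First I would enumerate the simple $\GL(W)$-equivariant holonomic $\D$-modules on $X$, which by Riemann--Hilbert correspond to pairs $(O,\mc{L})$ with $O$ a $\GL$-orbit and $\mc{L}$ an irreducible $\GL$-equivariant local system on $O$. For $X=\bw^2 W$, the orbits are the rank loci $O_k$ ($0\leq k\leq m$) and each stabilizer is connected, so every orbit carries only the trivial local system; this gives exactly $m+1$ simple modules, in bijection with the intermediate extensions $\mc{L}(Y_s,X)$. For $X=\Sym^2 W$, the orbits $O_k$ ($0\leq k\leq n$) have stabilizers of orthogonal type, with component group $\bb{Z}/2$ in positive rank and trivial at the origin; this produces two nonisomorphic equivariant local systems on each positive-rank orbit and one at $\{0\}$, for a total of $2n+1$ simple modules. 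The support statements asserted in the theorem follow directly from identifying each simple module with its corresponding orbit.

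Next I would compute the $\GL(W)$-character of each simple module by induction on the dimension of the support. The base case is the module supported at $\{0\}$, whose character is obtained by identifying it as the Fourier transform of $\mc{O}_X$ and applying (\ref{eq:cauchy}); this reproduces $\mf{B}_m$ in the skew-symmetric case and $\mf{C}_n^j$ in the symmetric case. For the inductive step, I realize $Y_s$ as the image of an equivariant resolution of singularities $\pi:Z\to Y_s$, where $Z$ is the total space of a homogeneous vector bundle over an appropriate (possibly isotropic) Grassmannian. Computing $R\pi_{*}\mc{O}_Z$ via Theorem~\ref{thm:bott} yields the $\GL(W)$-character of the normalization of $Y_s$. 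Subtracting the characters of the strictly lower composition factors (already identified by the induction hypothesis) and combining with the ideal description (\ref{eq:descIzs}) to keep track of $\GL$-invariant subquotients, one isolates the character of the intermediate extension. Comparing the resulting set of dominant weights with $\mc{B}(s,n)$ (respectively $\mc{C}^j(s,n)$) completes the induction.

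The main obstacle will be the parity bookkeeping in the symmetric case: the two simple modules supported on the same orbit closure must be distinguished, and they correspond to Schur functors $S_{\ll}V$ whose weights $\ll$ satisfy complementary parity conditions indexed by $j=1,2$. The $\bb{Z}/2$ component group of the stabilizer gives rise to a degree-two equivariant cover of $O_{n-s}$, and the two characters $\mf{C}_s^1$ and $\mf{C}_s^2$ arise as the invariant and anti-invariant parts under the associated involution. Verifying that the combinatorial conditions $\ll_s\geq s+1\geq\ll_{s+2}$ (for $\mc{C}^1$) versus $\ll_s\geq s+1,\ \ll_{s+1}\leq s$ (for $\mc{C}^2$), together with the prescribed parities of the $\ll_i$, emerge correctly from this splitting is the core combinatorial task. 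The skew-symmetric case, by contrast, has connected stabilizers and so runs in parallel without the parity complication, falling out of the same framework with lighter bookkeeping.
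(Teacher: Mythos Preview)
The paper does not prove this theorem: it is quoted from \cite[Theorems~4.1 and~5.1]{raicu-dmods} (see the sentence introducing the statement in Section~\ref{subsec:charDmods}), so there is no proof in the present paper to compare your proposal against.

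Your outline is a reasonable sketch of what such a proof might look like, and the enumeration in your first paragraph is correct: the orbit/local-system count via component groups of stabilizers is standard and gives $m+1$ simples in the skew-symmetric case (connected stabilizers of symplectic type) and $2n+1$ in the symmetric case (stabilizers of orthogonal type with component group $\bb{Z}/2$ on the positive-rank orbits). The inductive character computation in your second paragraph, however, is too loose as written. Computing $R\pi_*\mc{O}_Z$ for a resolution $\pi:Z\to Y_s$ yields the character of $\mc{O}_{Y_s}$ (these varieties are normal with rational singularities), which is an $S$-module, not a holonomic $\D_X$-module; the intermediate extension $\mc{L}(Y_s,X)$ is a different object, and the step ``subtracting the characters of the strictly lower composition factors'' presupposes that you already know the composition series of whatever equivariant holonomic module you have actually constructed---which is close to the content you are trying to establish. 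The argument in \cite{raicu-dmods} supplies the missing structure (via Fourier transform and an explicit analysis of the equivariant $\D$-module category), and you should consult that reference directly for the details.
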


We define the weights $\ll(\mf{B}_s)$ for $s=0,\cdots,m$, and $\ll(\mf{C}_s^j)$ for $s=0,\cdots,n$, $j=1,2$, via
\begin{equation}\label{eq:defllBs}
 \ll(\mf{B}_s)_i=2s\rm{ for }i=1,\cdots,n,
\end{equation}
\begin{equation}\label{eq:defllCsj}
 \ll(\mf{C}_s^1)_i=s+1\rm{ for }i=1,\cdots,n,\rm{ and }\ll(\mf{C}_s^2)_i=\begin{cases}
 s+1 & \rm{for }i=1,\cdots,s\\
 s & \rm{for }i=s+1,\cdots,n.
\end{cases}
\end{equation}
The weights $\ll(\mf{B}_s)$ (resp. $\ll(\mf{C}_s^j)$) are \defi{witness weights} for the simple equivariant $\D$-modules on the spaces of skew-symmetric (resp. symmetric) matrices in the following sense: the $\ll(\mf{B}_s)$-isotypic component of $\mf{B}_{s'}$ is non-zero if and only if $s=s'$; likewise, the $\ll(\mf{C}_s^j)$-isotypic component of $\mf{C}_{s'}^{j'}$ is non-zero if and only if $s=s'$ and $j=j'$ (or $s=s'=n$).

\subsection{Ext modules for the subquotients $J_{\x,p}$}\label{subsec:ext}

In this section $V$ denotes a complex vector space of dimension $n$. The next two results concern the calculation of certain $\Ext$ modules which appear in the spectral sequence for computing local cohomology.

\begin{lemma}\label{lem:Jxpsymm}
 Let $S=\Sym(\Sym^2 V)$. For $1\leq p\leq n$ consider a partition $\x=(x_1=x_2=\cdots=x_p\geq x_{p+1}\geq\cdots\geq x_n\geq 0)$. There exists a unique $\GL$-equivariant $S$-module $J_{\x,p}^{symm}$ with the properties
\begin{itemize}
 \item[(a)] As a $\GL$-representation, $J_{\x,p}^{symm}$ has a decomposition
 \[J_{\x,p}^{symm}=\bigoplus_{\y\in\P(p)} S_{\x+2\y}V.\]
 \item[(b)] $J_{\x,p}^{symm}$ is generated by its $\x$-isotypic component $S_{\x}V$.
\end{itemize}
We let $\mc{Q},\mc{R}$ denote the tautological quotient and sub- bundles on $\bb{G}=\bb{G}(p,V)$ and define
\[\S=\Sym(\Sym^2\mc{Q}),\ \S^{\vee}=\det(\Sym^2\mc{Q}^*)\oo\Sym(\Sym^2\mc{Q}^*),\rm{ and }\V=S_{\x^2}\mc{R}\oo S_{\x^1}\mc{Q},\]
where $\x^1=(x_1,\cdots,x_p)$ and $\x^2=(x_{p+1},\cdots,x_n)$. If we let $\mc{J}_{\x,p}^{symm}=\V\oo\S$ then
\[H^0(\bb{G},\mc{J}_{\x,p}^{symm})=J_{\x,p}^{symm},\ H^j(\bb{G},\mc{J}_{\x,p}^{symm})=0\rm{ for }j>0,\]
and moreover
 \[
  \Ext^{\bullet}_S(J_{\x,p}^{symm},S)=H^{{n+1\choose 2}-{p+1\choose 2}-\bullet}(\bb{G},\V\oo\mc{S}^{\vee})^*\oo\det(\Sym^2 W).
 \]
\end{lemma}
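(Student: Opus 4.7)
The plan is to construct $J_{\x,p}^{symm}$ as $H^0(\bb{G},\mc{J}_{\x,p}^{symm})$, verify properties (a) and (b) directly, derive uniqueness formally, and finally compute the Ext modules via Grothendieck duality along the Kempf--Weyman desingularization of $Y_p$. By the Cauchy formula (\ref{eq:cauchy}), $\S=\bigoplus_{\y\in\P(p)}S_{2\y}\mc{Q}$, so
\[
\mc{J}_{\x,p}^{symm}=\bigoplus_{\y\in\P(p)} S_{\x^2}\mc{R}\oo S_{\x^1+2\y}\mc{Q}.
\]
The hypothesis $x_1=\cdots=x_p$ together with $y_p\geq 0$ guarantees that each concatenation $(\x^1+2\y,\x^2)$ is already a dominant weight: the only inequality that must be checked, $x_p+2y_p\geq x_{p+1}$, holds because $x_p=x_1\geq x_{p+1}$. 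Bott's theorem therefore applies in its simplest (identity-permutation) form, yielding $H^j(\bb{G},S_{\x^2}\mc{R}\oo S_{\x^1+2\y}\mc{Q})=0$ for $j>0$ and $H^0=S_{\x+2\y}V$. Summing over $\y$ gives the vanishing of higher cohomology together with property (a).

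The surjection $V\oo\mc{O}_{\bb{G}}\onto\mc{Q}$ induces a $\GL$-equivariant morphism of sheaves of graded algebras $S\oo\mc{O}_{\bb{G}}\to\S$, which makes $\mc{J}_{\x,p}^{symm}=\V\oo\S$ a sheaf of $S$-modules and equips $J_{\x,p}^{symm}$ with an $S$-action. For (b), it is enough to exhibit, for each $\y\in\P(p)$, a nonzero element of $S\cdot S_\x V$ of weight $\x+2\y$: this is a standard highest-weight calculation, since the product of a highest-weight vector of $S_\x V$ with $(v_1^2)^{y_1}\cdots(v_p^2)^{y_p}\in\Sym^{|\y|}(\Sym^2 V)$ is a nonzero highest-weight vector of weight $\x+2\y$ in $J_{\x,p}^{symm}$. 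For uniqueness, any $\GL$-equivariant $S$-module $M$ satisfying (a) and (b) is a quotient of $S\oo S_\x V$ by a $\GL$-subrepresentation whose isotypic types are forced by (a); Schur's lemma then produces a canonical isomorphism $M\simeq J_{\x,p}^{symm}$ compatible with the distinguished copy of $S_\x V$.

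For the Ext formula, introduce the total space $Z=\operatorname{Spec}_{\bb{G}}(\S)$ of the vector bundle $\Sym^2\mc{Q}^*$ over $\bb{G}$, with projections $\pi\colon Z\to\bb{G}$ and $q\colon Z\to\Sym^2 W=\Spec S$; then $q$ is proper and realises the Kempf--Weyman resolution of $Y_p$, and $J_{\x,p}^{symm}=q_*(\pi^*\V)$. I will apply relative Grothendieck--Serre duality along $q$, packaged in the form of the Duality Theorem \cite[Thm.~3.1]{raicu-weyman-witt}, to rewrite $R\Hom_S(q_*(\pi^*\V),S)$ as a twisted and shifted cohomology on $\bb{G}$. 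Using $\omega_{Z/\bb{G}}\simeq\pi^*\det(\Sym^2\mc{Q})$, incorporating $\omega_{\bb{G}}$, and accounting for the $\GL$-twist $\omega_{\Sym^2 W}\simeq\det(\Sym^2 W)^{-1}$ on the affine space $\Sym^2 W$, the expression collapses on $\bb{G}$ to $\V\oo\S^\vee$ after the vector-space dualisation that converts $\V^*$ back into $\V$; the cohomological shift $\binom{n+1}{2}-\binom{p+1}{2}$ arises as $\dim\Sym^2 W$ minus the fibre dimension $\binom{p+1}{2}$ of $\pi$. The principal technical obstacle is the accurate bookkeeping of these dualizing line-bundle twists and degree shifts; once assembled correctly, the formula is a direct consequence of \cite[Thm.~3.1]{raicu-weyman-witt}.
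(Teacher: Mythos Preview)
Your proof is correct and follows essentially the same route as the paper's: the paper first constructs $J_{\x,p}^{symm}$ abstractly as the quotient of $S_{\x}V\otimes S$ by the sum of the isotypic components not of the form $\x+2\y$, then identifies it with $H^0(\bb{G},\mc{J}_{\x,p}^{symm})$, whereas you reverse the order; your highest-weight-vector argument for (b) is equivalent to the paper's reduction to Cartan multiplication $S_{2\y}V\otimes S_{\x}V\to S_{\x+2\y}V$, and both of you defer the $\Ext$ formula to \cite[Thm.~3.1]{raicu-weyman-witt}. The only place you are slightly less precise than the paper is uniqueness: the paper notes (via Littlewood--Richardson) that each $S_{\x+2\y}V$ occurs with multiplicity one in $S_{\x}V\otimes S$, so the kernel of $S_{\x}V\otimes S\onto M$ is the \emph{unique} subrepresentation with the prescribed character, which is what actually pins down $M$; ``Schur's lemma'' alone does not give this.
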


\begin{example}\label{ex:nonwitnessExt}
 Let $n=3$, $p=1$, and $\x=(2,2,0)$. We have $\bb{G}=\bb{P}^2$, $\mc{Q}=\mc{O}(1)$, and $\mc{R}=\Omega^1_{\bb{P}^2}(1)$. Moreover,
 \[\V=\Sym^2\mc{R}\oo\mc{Q}^2,\quad\S^{\vee}=\mc{Q}^{-2}\oo\Sym(\mc{Q}^{-2}),\]
 and therefore
 \[\V\oo\S^{\vee}=\bigoplus_{i\leq 0}\Sym^2\mc{R}\oo\mc{Q}^{2i}.\]
 Since $H^1(\bb{G},\Sym^2\mc{R})=\bw^2 V$, we get for $\bullet=4$ that $H^{{n+1\choose 2}-{p+1\choose 2}-\bullet}(\bb{G},\V\oo\mc{S}^{\vee})^*$ contains $\bw^2 W$ as a subrepresentation. Tensoring with $\det(\Sym^2 W)=S_{4,4,4}W$ and using the formula for $\Ext^{\bullet}_S(J_{\x,p}^{symm},S)$ in Lemma~\ref{lem:Jxpsymm} we obtain that 
 \[S_{5,5,4}W\rm{ is a subrepresentation of }\Ext^4_S(J_{\x,p}^{symm},S).\]
 One can show that in fact $J_{\x,p}=I_2/(I_2^2+I_3)$ where $I_2$ is the ideal generated by the $2\times 2$ minors of the generic symmetric $3\times 3$ matrix, while $I_3$ is the ideal generated by its determinant. A quick calculation with Macaulay2 \cite{M2} shows that $\Ext^4_S(J_{\x,p},S)$ vanishes in all degrees different from $-7$, and that the degree $-7$ component has dimension $3$. Since $\dim(S_{5,5,4}W)=\dim(\bw^2 W)=3$, it follows that in fact one has the equality $\Ext^4_S(J_{\x,p},S)=S_{5,5,4}W$.
\end{example}

\begin{proof}[Proof of Lemma~\ref{lem:Jxpsymm}]
 Consider the $\GL$-equivariant free $S$-module $M=S_{\x}V\oo S$. If follows from (\ref{eq:cauchy}) and the Littlewood-Richardson rule that
\[\scpr{S_{\x+2\y}V}{M}=1\rm{ for all }\y\in\P(p).\]
Moreover, if $\z\in\P(n)$ is such that $\scpr{S_{\z}V}{M}>0$ and $\z\neq\x+2\y$ for $\y\in\P(p)$ then the same rule implies that we must have $z_i>x_i$ for some $i=p+1,\cdots,n$. It follows that if we let $M_{\z}$ denote the $\z$-isotypic component of $M$, and define
\[N=\bigoplus_{\substack{\z\in\P(n) \\ \z\neq\x+2\y\rm{ for }\y\in\P(p)}} M_{\z},\]
then $N$ is a $\GL$-equivariant $S$-submodule of $M$, and moreover
\[M/N\simeq\bigoplus_{\y\in\P(p)} S_{\x+2\y}V\]
and $M/N$ is generated by $S_{\x}V$ since $M$ is. This proves the existence of a module with properties~(a) and~(b). For the uniqueness, observe that if $P$ satisfies (b) then there exists a surjective map $M\onto P$. If in addition it satisfies (a), then the kernel of this surjection is isomorphic to $N$ as a $\GL$-subrepresentation. Since there is no $N\neq N'\subset M$ such that $N\simeq N'$ as $\GL$-representations, it follows that $\rm{ker}(M\onto P)=N$ and $P\simeq M/N$. We write $J_{\x,p}^{symm}=M/N$.

To perform the $\Ext$ calculation, we consider $\mc{M}(\V)=\V\oo\S$, with the notation as in the statement of the lemma. We have by Bott's Theorem for Grassmannians \cite[Cor.~4.1.9]{weyman} that
\[H^0(\bb{G},\mc{M}(\V))=\bigoplus_{\y\in\P(p)} S_{\x+2\y}V,\rm{ and }H^j(\bb{G},\mc{M}(\V))=0\rm{ for }j>0.\]
If we can show that $H^0(\bb{G},\mc{M}(\V))$ is generated by $S_{\x}V$, then it must be isomorphic to $J_{\x,p}^{symm}$, and the calculation of $\Ext$ in terms of sheaf cohomology follows directly from \cite[Thm.~3.1]{raicu-weyman-witt}.

The tautological surjection $V\oo\mc{O}_{\bb{G}}\onto\mc{Q}$ induces a map $\Sym^2 V\oo\mc{O}_{\bb{G}}\onto\Sym^2\mc{Q}$, which in turn yields a map $\pi:S\oo\mc{O}_{\bb{G}}\onto\mc{S}$. $\pi$ induces a surjection $S\onto H^0(\bb{G},\S)$ on global sections (see for instance \cite[Section~6.3]{weyman}) and the structure of $H^0(\bb{G},\mc{M}(\V))$ as an $S$-module is defined by
\[S\oo H^0(\bb{G},\mc{M}(\V))\to H^0(\bb{G},\S)\oo H^0(\bb{G},\mc{M}(\V))\to H^0(\bb{G},\mc{M}(\V)).\]
To prove that $H^0(\bb{G},\mc{M}(\V))$ is generated by $S_{\x}V$, it is then enough to show that the multiplication
\[H^0(\bb{G},\S)\oo H^0(\bb{G},\V)\to H^0(\bb{G},\mc{M}(\V))\]
is surjective. Writing $\S=\bigoplus_{\y\in\P(p)}S_{2\y}\mc{Q}$, we have to check that the multiplication
\[H^0(\bb{G},S_{2\y}\mc{Q})\oo H^0(\bb{G},\V)\to H^0(\bb{G},S_{2\y}\mc{Q}\oo\V)\]
is surjective, but this is just given by the Cartan multiplication
\[S_{2\y}V\oo S_{\x}V\to S_{\x+2\y}V.\qedhere\]
\end{proof}

An analogous argument as in the proof of Lemma~\ref{lem:Jxpsymm} yields the following

\begin{lemma}\label{lem:Jxpskew}
 Let $S=\Sym\left(\bw^2 V\right)$ and $m=\lfloor n/2\rfloor$. For $1\leq p\leq m$ consider a partition $\x=(x_1=x_2=\cdots=x_{2p}\geq x_{2p+1}\geq\cdots\geq x_n\geq 0)$. There exists a unique $\GL$-equivariant $S$-module $J_{\x,p}^{skew}$ with the properties
\begin{itemize}
 \item[(a)] As a $\GL$-representation, $J_{\x,p}^{skew}$ has a decomposition
 \[J_{\x,p}^{skew}=\bigoplus_{\y\in\P(p)} S_{\x+\y^{(2)}}V.\]
 \item[(b)] $J_{\x,p}^{skew}$ is generated by its $\x$-isotypic component $S_{\x}V$.
\end{itemize}
We let $\mc{Q},\mc{R}$ denote the tautological quotient and sub- bundles on $\bb{G}=\bb{G}(2p,V)$ and define
\[\S=\Sym\left(\bw^2\mc{Q}\right),\ \S^{\vee}=\det\left(\bw^2\mc{Q}^*\right)\oo\Sym\left(\bw^2\mc{Q}^*\right),\rm{ and }\V=S_{\x^2}\mc{R}\oo S_{\x^1}\mc{Q},\]
where $\x^1=(x_1,\cdots,x_{2p})$ and $\x^2=(x_{2p+1},\cdots,x_n)$. If we let $\mc{J}_{\x,p}^{skew}=\V\oo\S$ then
\[H^0(\bb{G},\mc{J}_{\x,p}^{skew})=J_{\x,p}^{skew},\ H^j(\bb{G},\mc{J}_{\x,p}^{skew})=0\rm{ for }j>0,\]
and moreover
 \[
  \Ext^{\bullet}_S(J_{\x,p}^{skew},S)=H^{{n\choose 2}-{2p\choose 2}-\bullet}(\bb{G},\V\oo\mc{S}^{\vee})^*\oo\det\left(\bw^2 W\right).
 \]
\end{lemma}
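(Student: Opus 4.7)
The plan is to mimic the three-step argument used for Lemma~\ref{lem:Jxpsymm}, making the obvious replacements: $\Sym^2$ by $\bw^2$, the partitions $2\y$ by $\y^{(2)}$, and the Grassmannian $\bb{G}(p,V)$ by $\bb{G}(2p,V)$. First I would establish existence and uniqueness of $J_{\x,p}^{skew}$ by starting from the free $\GL$-equivariant $S$-module $M=S_{\x}V\oo S$. Using the second identity in~(\ref{eq:cauchy}) together with the Littlewood--Richardson rule, one checks that for every $\y\in\P(p)$ the isotypic component $\scpr{S_{\x+\y^{(2)}}V}{M}=1$, and that any other irreducible $S_{\z}V$ appearing in $M$ must satisfy $z_i>x_i$ for some $i>2p$ (this is the skew-symmetric analogue of the inequality used in the symmetric case, and it uses crucially that $\x$ is constant on its first $2p$ entries). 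The sum $N$ of those ``extra'' isotypic components is then an $S$-submodule of $M$, and $J_{\x,p}^{skew}:=M/N$ satisfies (a) and (b). Uniqueness follows as in the symmetric case from the observation that the decomposition of $M$ is multiplicity-free on the isotypic components of $J_{\x,p}^{skew}$.

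Next I would carry out the sheaf cohomology computation on $\bb{G}=\bb{G}(2p,V)$. Writing $\mc{J}_{\x,p}^{skew}=\V\oo\S$ with $\S=\Sym(\bw^2\mc{Q})=\bigoplus_{\y\in\P(p)}S_{\y^{(2)}}\mc{Q}$, Bott's Theorem for Grassmannians (in the form of \cite[Cor.~4.1.9]{weyman}) gives $H^j(\bb{G},\mc{J}_{\x,p}^{skew})=0$ for $j>0$ and
\[H^0(\bb{G},\mc{J}_{\x,p}^{skew})=\bigoplus_{\y\in\P(p)}S_{\x+\y^{(2)}}V,\]
matching the decomposition of $J_{\x,p}^{skew}$ as a $\GL$-representation. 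To identify this with $J_{\x,p}^{skew}$ as an $S$-module I need property (b), i.e. that the image is generated by its $\x$-isotypic component. As in the symmetric case, the tautological surjection $V\oo\mc{O}_{\bb{G}}\onto\mc{Q}$ induces $\bw^2 V\oo\mc{O}_{\bb{G}}\onto\bw^2\mc{Q}$ and hence a surjection $S\onto H^0(\bb{G},\S)$, so it suffices to check the surjectivity of the Cartan product
\[H^0(\bb{G},S_{\y^{(2)}}\mc{Q})\oo H^0(\bb{G},\V)\to H^0(\bb{G},S_{\y^{(2)}}\mc{Q}\oo\V)\]
for each $\y\in\P(p)$, which reduces to the Cartan surjection $S_{\y^{(2)}}V\oo S_{\x}V\onto S_{\x+\y^{(2)}}V$.

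Finally, with $J_{\x,p}^{skew}$ realized as $H^0(\bb{G},\mc{J}_{\x,p}^{skew})$ and the higher cohomology vanishing, the Duality Theorem \cite[Thm.~3.1]{raicu-weyman-witt} applies and expresses $\Ext^{\bullet}_S(J_{\x,p}^{skew},S)$ in terms of sheaf cohomology of $\V\oo\mc{S}^{\vee}$ on $\bb{G}(2p,V)$; the cohomological shift is the codimension ${n\choose 2}-{2p\choose 2}$ of the Grassmannian in the space of rank $\le 2p$ skew-symmetric matrices, and the twist by $\det(\bw^2 W)$ is the dualizing sheaf of $X=\bw^2 W$. I anticipate that the only genuinely delicate point is the Littlewood--Richardson bookkeeping in the first step, namely verifying that the ``bad'' isotypic subspace $N\subset M$ is stable under multiplication by $S$; everything else is a direct transcription of the symmetric argument with the numerical data ${n+1\choose 2}-{p+1\choose 2}$ replaced by ${n\choose 2}-{2p\choose 2}$.
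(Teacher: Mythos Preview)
Your proposal is correct and follows exactly the approach the paper intends: the paper's proof of Lemma~\ref{lem:Jxpskew} is simply the sentence ``An analogous argument as in the proof of Lemma~\ref{lem:Jxpsymm} yields the following,'' and you have accurately spelled out that analogy step by step (free module $M=S_{\x}V\oo S$, submodule $N$ of extra isotypic components, Bott's Theorem for $H^{\bullet}(\bb{G},\mc{J}_{\x,p}^{skew})$, Cartan multiplication for generation, and the Duality Theorem \cite[Thm.~3.1]{raicu-weyman-witt} for the $\Ext$ formula). One small remark: your geometric description of the shift ${n\choose 2}-{2p\choose 2}$ is slightly off---it is not the codimension of the Grassmannian in anything, but rather $\dim(\bw^2 V)-\dim(\bw^2\mc{Q})$, which is the input the Duality Theorem requires; this does not affect the argument.
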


In what follows we will write simply $J_{\x,p}$ instead of $J_{\x,p}^{skew}$ or $J_{\x,p}^{symm}$ when no confusion is possible.

\subsection{A spectral sequence for computing local cohomology}\label{subsec:spseq}

As before, we consider the vector space $X$ of symmetric (resp. skew-symmetric) $n\times n$ matrices, and let $S$ denote its coordinate ring $\Sym(\Sym^2 V)$ (resp. $\Sym(\bw^2 V)$). Let $Y_p\subset X$ denote the subvariety of matrices of rank at most $p$ (resp. $2p$). We write $m=\lfloor n/2\rfloor$ and define the $\GL$-equivariant $S$-modules
\begin{equation}\label{eq:defJpsymm}
 J_p = J_p^{symm} = \bigoplus_{\substack{\y\in\P(n) \\ y_1=\cdots=y_{p+1}}} J_{2\cdot\y,p},\rm{ for }p=0,\cdots,n-1,\rm{ when }S=\Sym(\Sym^2 V),
\end{equation}
respectively
\begin{equation}\label{eq:defJpskew}
 J_p = J_p^{skew} = \bigoplus_{\substack{\y\in\P(m) \\ y_1=\cdots=y_{p+1}}} J_{\y^{(2)},p},\rm{ for }p=0,\cdots,m-1,\rm{ when }S=\Sym\left(\bw^2 V\right).
\end{equation}

\begin{proposition}\label{prop:specseq}
 There exists a descending sequence $(I_r)_{r\geq 0}$ of ideals in $S$ and a spectral sequence
 \begin{equation}\label{eq:specseqgeneric}
  E^{i,j}_2=\Ext^{i-j}_S(I_j/I_{j+1},S)\Rightarrow\mc{H}^{i-j}_{Y_p}(X,\mc{O}_X),  
 \end{equation}
 where moreover
 \begin{equation}\label{eq:isomIr/Ir+1-Jp}
  \bigoplus_{r\geq 0} I_r/I_{r+1}\simeq J_p.  
 \end{equation}
\end{proposition}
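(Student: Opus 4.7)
The plan is to construct a descending filtration $(I_r)_{r\geq 0}$ by linearly extending a partial order on a poset of partitions, and then to extract the required spectral sequence from the associated exact couple. Let $A_p$ denote the set of partitions $\y\in\P(n)$ (in the symmetric case) or $\y\in\P(m)$ (in the skew case) satisfying $y_1=\cdots=y_{p+1}$. Fix a linear extension $\y^{(0)}\prec\y^{(1)}\prec\cdots$ of the natural componentwise partial order on $A_p$ with the property that $\y^{(k)}_1\to\infty$; for instance, order primarily by $y_1$ (which equals $y_{p+1}$), partitioning $A_p$ into finite levels, and break ties within each level arbitrarily. Define $I_r$ to be the ideal of $S$ generated by the Schur components $S_{2\y^{(k)}}V$ for $k\geq r$ in the symmetric case (and by $S_{(\y^{(k)})^{(2)}}V$ in the skew case). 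Since $\y^{(0)}=0$ contributes the generator $S_0V=\bb{C}$, we have $I_0=S$, while $I_1$ is the defining ideal of $Y_p$, and $I_r\supseteq I_{r+1}$ for all $r$.

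The central combinatorial step is to identify $I_r/I_{r+1}\simeq J_{2\y^{(r)},p}$ via Lemma~\ref{lem:Jxpsymm} (respectively Lemma~\ref{lem:Jxpskew}). By (\ref{eq:descIzs}), $I_r$ decomposes as a $\GL$-representation into $\bigoplus_\mu S_{2\mu}V$ summed over $\mu$ with $\mu\geq\y^{(k)}$ for some $k\geq r$. I would show that the Schur components of $I_r/I_{r+1}$ are exactly $\{S_{2(\y^{(r)}+\y')}V:\y'\in\P(p)\}$: for one direction, the fact that $\y'_i=0$ for $i>p$ together with the equal first $p+1$ parts of $\y^{(r)}$ and any $\y^{(k)}\in A_p$ forces the relation $\y^{(r)}+\y'\geq\y^{(k)}$ to imply $\y^{(r)}\geq\y^{(k)}$ componentwise, which by the linear extension is impossible for $k>r$. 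Conversely, for $\mu\geq\y^{(r)}$ not of this form, the auxiliary partition $\tilde\y=(\mu_{p+1},\ldots,\mu_{p+1},\mu_{p+2},\ldots,\mu_n)$ lies in $A_p$, strictly dominates $\y^{(r)}$, and is dominated by $\mu$; the linear extension thus identifies $\tilde\y=\y^{(k)}$ for some $k>r$, contradicting $\mu\notin I_{r+1}$. Since $I_r/I_{r+1}$ is generated as an $S$-module by the image of $S_{2\y^{(r)}}V$ (all other generators of $I_r$ project to zero), the uniqueness clause in Lemma~\ref{lem:Jxpsymm}/\ref{lem:Jxpskew} produces the desired isomorphism, and summing over $r$ yields (\ref{eq:isomIr/Ir+1-Jp}).

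For the spectral sequence, apply $\Ext^\bullet_S(-,S)$ to the short exact sequences $0\to I_r/I_{r+1}\to S/I_{r+1}\to S/I_r\to 0$ and assemble the resulting long exact sequences into an exact couple with $D_1^r=\Ext^\bullet_S(S/I_r,S)$ and $E_1^r=\Ext^\bullet_S(I_r/I_{r+1},S)$; this yields a spectral sequence of the shape stated in the proposition, converging to $\varinjlim_r\Ext^\bullet_S(S/I_r,S)$. To identify this colimit with $\mc{H}^\bullet_{Y_p}(X,\mc{O}_X)$, check that $\{I_r\}$ is cofinal with the powers $\{I_{Y_p}^N\}$: Cartan multiplication of the highest weight of $S_{2(1^{p+1})}V$ yields $I_{Y_p}^N\supseteq I_{(N^{p+1})}$, and the ordering ensures $\y^{(k)}_1\geq N$ for $k$ beyond some threshold, sandwiching each $I_r$ between two powers of $I_{Y_p}$. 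The main obstacle is the combinatorial identification of the Schur components of $I_r/I_{r+1}$, which exploits the precise structure of $A_p$ to construct the auxiliary $\tilde\y$; once this is in place, the uniqueness in Lemma~\ref{lem:Jxpsymm}/\ref{lem:Jxpskew} takes care of the $S$-module structure, and the convergence and exact-couple formalism are standard.
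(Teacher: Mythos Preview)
Your proof is correct and follows essentially the same approach as the paper: choose a linear extension of the componentwise order on $A_p$, set $I_r=\sum_{k\geq r}I_{\y^{(k)}}$, identify each quotient $I_r/I_{r+1}$ with $J_{\x,p}$ via the characterization in Lemma~\ref{lem:Jxpsymm}/\ref{lem:Jxpskew}, and extract the spectral sequence from the resulting filtration. You actually supply more detail than the paper does---the paper dispatches the identification of $I_r/I_{r+1}$ in one line by citing (\ref{eq:descIzs}) and refers to \cite[Thm.~4.1]{raicu-VeroDmods} for the spectral sequence, whereas you write out the combinatorics and the exact-couple construction explicitly. One small point: your cofinality argument only establishes that for each $N$ there is an $r$ with $I_r\subseteq I_{Y_p}^N$; the reverse inclusion (for each $r$ some $I_{Y_p}^N\subseteq I_r$) follows since $\sqrt{I_r}=I_{Y_p}$, which you should state.
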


\begin{remark}\label{rem:nondegeneratesseq}
 It can be shown that the spectral sequence in Proposition~\ref{prop:specseq} is degenerate when $S=\Sym\left(\bw^2 V\right)$, but this is not the case when $S=\Sym(\Sym^2 V)$ as the following example shows (this behavior is typical, and the spectral sequence is never degenerate as long as $0<p<n-1$). 
 
 Consider the situation of Example~\ref{ex:nonwitnessExt}: $n=3$, $p=1$. As shown there, $\Ext^4_S(J_p,S)$ contains a copy of the representation $S_{5,5,4}W$, which means that $S_{5,5,4}W$ appears on the $E_2$-page of the spectral sequence. However, a cancellation must occur when running the spectral sequence, since $S_{5,5,4}W$ doesn't occur as a subrepresentation in $\mc{H}^{\bullet}_{Y_p}(X,\mc{O}_X)$: this can be seen by noting that $S_{5,5,4}W$ is not a subrepresentation of any of the $\GL$-equivariant holonomic $\D$-modules on symmetric matrices (see Theorem~\ref{thm:charsDmods}(b) and~(\ref{eq:defBsCsj})).
\end{remark}

\begin{proof}[Proof of Proposition~\ref{prop:specseq}]
 We write $\P=\P(n)$ if $S=\Sym(\Sym^2 V)$, and $\P=\P(m)$ when $S=\Sym\left(\bw^2 V\right)$. We choose a total ordering of the partitions $\ll\in\P$ with $\ll_1=\cdots=\ll_{p+1}$:
\begin{equation}\label{eq:orderP}
 \ll(0),\ll(1),\cdots,\ll(r),\cdots
\end{equation}
such that there exists no $i<j$ with $\ll(i)\geq\ll(j)$. Using (\ref{eq:defIz}), we define a decreasing sequence of ideals
\[S=I_0\supset I_1\supset\cdots\supset I_r\supset\cdots,\rm{ by}\]
\[I_r=\sum_{i\geq r}I_{\ll(i)}.\]
We fix $r$ and define
\[\x=2\ll(r)\rm{ if }S=\Sym(\Sym^2 V),\rm{ and }\x=\ll(r)^{(2)}\rm{ if }S=\Sym\left(\bw^2 V\right).\]
We will prove that $I_r/I_{r+1}\simeq J_{\x,p}$, which then implies (\ref{eq:isomIr/Ir+1-Jp}). The existence of the spectral sequence (\ref{eq:specseqgeneric}) follows as in the proof of \cite[Thm.~4.1]{raicu-VeroDmods}.

To prove $I_r/I_{r+1}\simeq J_{\x,p}$ we need to check that $I_r/I_{r+1}$ satisfies the assumptions (a) and (b) of Lemma~\ref{lem:Jxpsymm} resp.~\ref{lem:Jxpskew}. Part (a) is a consequence of the equality (\ref{eq:descIzs}). Part (b) follows from the fact that $I_r/I_{r+1}$ is a quotient of $I_{\ll(r)}$, and $I_{\ll(r)}$ is generated by $S_{\x}V$.
\end{proof}

\section{Skew-symmetric matrices}\label{sec:skew}

In this section $W=\bb{C}^n$, $V=W^*$, and $S=\Sym(\bw^2 V)$ is the coordinate ring of the vector space $\bw^2 W$ of $n\times n$ skew-symmetric matrices.

\begin{theorem}\label{thm:skew}
 Let $m=\lfloor n/2\rfloor$ and fix $0\leq p<m$. With notation (\ref{eq:defqbinomial}), (\ref{eq:genfcngeneric}), (\ref{eq:defllBs}) and (\ref{eq:defJpskew}), we have
 \[\scpr{\Ext^{\bullet}_S(J_p^{skew},S)}{S_{\ll(\mf{B}_s)}W}=
 \begin{cases}
  \displaystyle q^{2(m-p)^2-(m-p)+2s}\cdot{s-1\choose s-(m-p)}_{q^4} & \rm{if }m-p\leq s\leq m,\rm{ and }n=2m+1, \\
  \displaystyle q^{2(m-p)^2-(m-p)}\cdot{s-1\choose s-(m-p)}_{q^4} & \rm{if }m-p\leq s\leq m,\rm{ and }n=2m, \\
  0 & \rm{otherwise}.
 \end{cases}
 \]
\end{theorem}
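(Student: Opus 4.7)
My plan is to use Lemma~\ref{lem:Jxpskew} and Theorem~\ref{thm:bott}(a) to compute the multiplicity of the one-dimensional representation $S_{\ll(\mf{B}_s)}W = \det(W)^{2s}$ inside $\Ext^\bullet_S(J_{\y^{(2)},p}^{skew}, S)$ for each $\y$ appearing in \eqref{eq:defJpskew}, then aggregate. Write $N = \binom{n}{2} - \binom{2p}{2}$ and $c = n-1-2s$. Lemma~\ref{lem:Jxpskew} together with $\det(\bw^2 W) = \det(W)^{n-1}$ gives
\[
\scpr{\Ext^j_S(J_{\y^{(2)},p}^{skew}, S)}{S_{\ll(\mf{B}_s)}W} = \scpr{H^{N-j}(\bb{G}, \V \oo \S^\vee)}{\det(W)^{n-1-2s}}.
\]
Since $\det(V) = \det(\mc{Q}) \oo \det(\mc{R})$ on $\bb{G}$ has $\GL(W)$-character $\det(W)^{-1}$, tensoring with $\det(V)^{n-1-2s}$ converts the right side into the multiplicity of the trivial representation. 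Expanding $\S^\vee = \det(\mc{Q})^{-(2p-1)} \oo \Sym(\bw^2 \mc{Q}^*)$ via \eqref{eq:cauchy} and using $S_{\ul{w}^{(2)}}\mc{Q}^* = S_{(\ul{w}^{(2)})^*}\mc{Q}$, the twisted sheaf decomposes as $\bigoplus_{\ul{w}\in\P(p)} S_\b\mc{R} \oo S_{\a(\ul{w})}\mc{Q}$ with $\b = \x^2 + (c^{n-2p})$, so Theorem~\ref{thm:bott}(a) contributes $q^{|\b|}$ exactly when $\b \in \P(n-2p,2p)$ and $\a(\ul{w}) = (\b')^*$.

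The core combinatorial step is identifying the valid $(\y, \ul{w})$. The pair structure $\a(\ul{w})_{2i-1} = \a(\ul{w})_{2i}$ forces $\b'_{2i-1} = \b'_{2i}$ for $i = 1,\ldots,p$. Writing $\xi_k = y_{p+k}$ (so $\xi_1 = a$) and examining $\b'_j$ separately for $j \leq c$ and $j > c$, this condition becomes $\xi' = \eta^{(2)}$ for some $\eta \in \P(P, m-p)$, where $P = p+s-m$. When $n = 2m$ the pair condition at the boundary $j = c$ (or, when $c < 0$, the positivity of $\b$) additionally forces $\xi'_1 = m-p$, yielding the refined structure $\xi' = (m-p, \eta_1, \eta_1, \ldots, \eta_P, \eta_P)$; when $n = 2m+1$ no such extra entry appears. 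In both cases the constraint $\ul{w} \in \P(p)$ (equivalently $u_p \geq -K$ with $K = a + n - 2p - 2s$) forces every part of $\eta$ to be strictly positive, so valid $\eta$'s biject with $\tilde{\eta} = \eta - (1^P) \in \P(P, m-p-1)$. For $s < m-p$ one has $P < 0$, no valid $\eta$ exists, and the vanishing in the ``otherwise'' case follows.

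To conclude, I combine $|\b| = 2|\xi| + (n-2p)c$ with $|\xi| = (m-p)\epsilon_n + 2|\eta|$, where $\epsilon_n = 1$ if $n = 2m$ and $\epsilon_n = 0$ if $n = 2m+1$, and simplify to obtain
\[
N - |\b| = 2(m-p)^2 - (m-p) + 2s(1 - \epsilon_n) + 4\bigl((m-p)P - |\eta|\bigr).
\]
Summing over valid $\eta$ and substituting $|\eta| = |\tilde{\eta}| + P$, the varying part of the exponent becomes $4(P(m-p-1) - |\tilde{\eta}|)$. Applying \eqref{eq:genfunqbincomplement} to $\binom{s-1}{s-(m-p)}_{q^4}$, which expresses this Gauss polynomial as $\sum_{\tilde{\eta} \in \P(P, m-p-1)} q^{4(P(m-p-1) - |\tilde{\eta}|)}$, yields the claimed formula in both parity cases.

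The main obstacle is the careful bookkeeping in the middle paragraph: handling the even/odd-$n$ parity uniformly (which is what introduces the extra $q^{2s}$ factor for $n$ odd, and the modified pair structure of $\xi'$ for $n$ even), and verifying that the partition constraint $\ul{w} \in \P(p)$ is equivalent to all $P$ parts of $\eta$ being strictly positive. Once these pieces are in place the Gauss polynomial identity gives the final expression cleanly.
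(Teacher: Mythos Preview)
Your proposal is correct and follows essentially the same route as the paper: reduce via Lemma~\ref{lem:Jxpskew} and the identity $\det(\bw^2 W)=\det(W)^{n-1}$ to the multiplicity of the trivial representation in $H^{N-\bullet}(\bb{G},S_\b\mc{R}\oo S_\a\mc{Q})$, apply Theorem~\ref{thm:bott}(a), and analyze the resulting combinatorial constraints. The only difference is bookkeeping: the paper phrases the constraint as ``$d=2s+2p-n+1$ and all $\b_i$ even'' and parametrizes the surviving $\y$'s directly by $\z\in\P(m-p-1,s+p-m)$ via $y_{p+1+i}=2z_i$ (odd $n$) or $2z_i+1$ (even $n$), whereas you encode the parity condition through the conjugate partition $\xi'$ and recover the same index set via $\tilde\eta=\eta-(1^P)\in\P(P,m-p-1)$, which is precisely the conjugate of the paper's $\z$.
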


We begin by showing how to use this theorem in order to prove (\ref{eq:Hpqskew}).

\begin{proof}[Proof of~(\ref{eq:Hpqskew})]
 We fix $0\leq p<m$, and follow the strategy for proving the Main Theorem outlined in the Introduction. We consider the equation (\ref{eq:DmodHpqtoFi}), where $I=\{0,\cdots,m\}$ and for $s\in I$, $F_s$ is the $\D$-module with character $\mf{B}_s$. Since $\ll(\mf{B}_s)$ is a witness weight for $F_s$, we can detect the polynomials $P_s(q)$ by
 \[P_s(q)=\scpr{\mc{H}^{\bullet}_{Y_p}(X,\mc{O}_X)}{S_{\ll(\mf{B}_s)}W}.\]
 We now use the spectral sequence (\ref{eq:specseqgeneric}) to analyze $\mc{H}^{\bullet}_{Y_p}(X,\mc{O}_X)$. It follows from (\ref{eq:isomIr/Ir+1-Jp}) with $J_p=J_p^{skew}$, and from Theorem~\ref{thm:skew} that $S_{\ll(\mf{B}_s)}W$ can appear as a subrepresentation of $E_2^{i,j}$ only when $i-j\equiv m-p\ (\rm{mod }2)$. It follows that as we run the spectral sequence, no cancellations can occur between copies of $S_{\ll(\mf{B}_s)}W$, and therefore
 \[P_s(q)=\scpr{\mc{H}^{\bullet}_{Y_p}(X,\mc{O}_X)}{S_{\ll(\mf{B}_s)}W}=\scpr{\Ext^{\bullet}_S(J_p^{skew},S)}{S_{\ll(\mf{B}_s)}W}.\]
 To finish the proof of (\ref{eq:Hpqskew}), we just need to note that the intersection homology $\D$-module $D_s$ is $F_{m-s}$ by Theorem~\ref{thm:charsofICmods}, so the coefficient of $[D_s]$ in $H_p(q)$ is
 \[P_{m-s}(q)=
  \begin{cases}
  \displaystyle q^{2(m-p)^2-(m-p)+2(m-s)}\cdot{m-s-1\choose m-s-(m-p)}_{q^4} & \rm{if }0\leq s\leq p,\rm{ and }n=2m+1, \\
  \displaystyle q^{2(m-p)^2-(m-p)}\cdot{m-s-1\choose m-s-(m-p)}_{q^4} & \rm{if }0\leq s\leq p,\rm{ and }n=2m, \\
  0 & \rm{otherwise}.
 \end{cases}
 \]
This formula is precisely what (\ref{eq:Hpqskew}) predicts, which concludes our proof.
\end{proof}

\begin{proof}[Proof of Theorem~\ref{thm:skew}]
 
We begin by fixing some notation. For $d\geq 0$ we consider a partition $\y\in\P(m)$ with $y_1=\cdots=y_{p+1}=d$. We let $\x=\y^{(2)}\in\mc{P}(n)$ (setting $x_n=0$ if $n$ is odd) and write $\x$ as the concatenation of $\x^1=(d^{2p})\in\P(2p)$ and $\x^2\in\P(n-2p)$. We consider the partition $\b=\x^2+((n-1-2s)^{n-2p})\in\P(n-2p)$, so that
\begin{equation}\label{eq:bfromy-skew}
\b_1=\b_2=d+n-1-2s,\ \b_{2i-1}=\b_{2i}=y_{p+i}+n-1-2s\rm{ for }i=2,\cdots,m-p.
\end{equation}
For $s=m-p,\cdots,m$ we consider the collection of dominant weights
\begin{equation}\label{eq:defAs-skew}
\mc{A}_s=\{\a\in\bb{Z}^{2p}_{dom}:\a_{2i-1}=\a_{2i}\rm{ for }i=1,\cdots,p,\rm{ and }\a_1\leq d+n-2s-2p\}, 
\end{equation}
We define the cohomologically graded module
\begin{equation}\label{eq:defH*-skew}
H^{\bullet}=H^{{n\choose 2}-{2p\choose 2}-\bullet}(\bb{G},S_{\b}\mc{R}\oo\det(\mc{Q})^{\oo(d+n-2s-2p)}\oo\Sym(\bw^2\mc{Q}^*)) 
\end{equation}
with $\bb{G},\mc{Q},\mc{R}$ as in Lemma~\ref{lem:Jxpskew}. Our proof is based on a number of claims which we explain at the end:

\ul{Claim 1:} For $J_{\x,p}^{skew}$ as in Lemma~\ref{lem:Jxpskew} and $s=m-p,\cdots,m$,
\[
\scpr{\Ext^{\bullet}_S(J_{\x,p}^{skew},S)}{S_{\ll(\mf{B}_s)}W} = \scpr{H^{\bullet}}{\bb{C}}.
\]

\ul{Claim 2:}
\[\scpr{H^{\bullet}}{\bb{C}}=\begin{cases}
 q^{{n\choose 2}-{2p\choose 2}-|\b|} & \rm{if }\b\in\P(n-2p,2p)\rm{ and }(\b')^*\in\mc{A}_s, \\
 0 & \rm{otherwise}.
\end{cases}\]

\ul{Claim 3:} For $\b\in\P(n-2p,2p)$ satisfying (\ref{eq:bfromy-skew}), we have the equivalences

\[(\b')^*\in\mc{A}_s\Longleftrightarrow\begin{cases}
d=2s+2p-n+1, \\
\b_i\rm{ is even for all }i=1,\cdots,n-2p.
\end{cases}
\]

We now explain the proof of the theorem based on Claims~1--3. It follows from Claims~2 and~3 that the only partitions $\y\in\P(m)$ for which the corresponding $\b$ (as in (\ref{eq:bfromy-skew})) yields $\scpr{H^{\bullet}}{\bb{C}}\neq 0$, satisfy
\begin{equation}\label{eq:skewcondy}
y_1=\cdots=y_{p+1}=2s+2p-n+1,\quad y_i\equiv n-1\ (\rm{mod }2). 
\end{equation}
Any such $\y$ is obtained from a unique partition $\z\in\P(m-p-1,s+p-m)$ via
\begin{equation}\label{eq:ztoy}
y_{p+1+i}=\begin{cases}
 2z_i & \rm{if }n\rm{ is odd,} \\
 2z_i+1 & \rm{if }n\rm{ is even,}
\end{cases}
\quad\rm{ for }i=1,\cdots,m-p-1.
\end{equation}
Since $|\b|=(n-1-2s)(n-2p)+2(y_{p+1}+\cdots+y_m)$, we get that $|\b|$ and $|\z|$ are related via
\[
|\b|=\begin{cases}
 (2m-2s)(2m+1-2p)+2(2s+2p-2m)+4|\z| & \rm{ if }n=2m+1\rm{ is odd}; \\
 (2m-1-2s)(2m-2p)+2(2s+2p-2m+1)+2(m-p-1)+4|\z| & \rm{ if }n=2m\rm{ is even}. \\
\end{cases}
\]
Equivalently, after some easy manipulations we have
\[
{n\choose 2}-{2p\choose 2}-|\b| =\begin{cases}
 2(m-p)^2-(m-p)+2s+4((m-p-1)(s-(m-p))-|\z|) & \rm{ if }n=2m+1\rm{ is odd}; \\
 2(m-p)^2-(m-p)+4((m-p-1)(s-(m-p))-|\z|) & \rm{ if }n=2m\rm{ is even}. \\
\end{cases}
\]

Combining Claims~1--3 with the above equality and the fact that
\[\sum_{\z\in\P(m-p-1,s-(m-p))}q^{4((m-p-1)(s-(m-p))-|\z|)}\overset{(\ref{eq:genfunqbincomplement})}{=} {s-1\choose s-(m-p)}_{q^4}, \]
we obtain the desired conclusion. To finish the proof of the theorem, it remains to explain Claims~1--3.

\ul{Proof of Claim 1:} The following easy identities will be useful next:
\begin{equation}\label{eq:basicrelns-skew1}
\det\left(\bw^2 W\right)=\det(W)^{\oo(n-1)},\quad S_{\ll(\mf{B}_s)}W=\det(W)^{\oo(2s)},
\end{equation}
\begin{equation}\label{eq:basicrelns-skew2}
\det(V)\oo\mc{O}_{\bb{G}}=\det(\mc{Q})\oo\det(\mc{R}),\quad\det\left(\bw^2\mc{Q}^*\right)=\det(\mc{Q})^{\oo(-2p+1)},\quad S_{\x^1}\mc{Q}=\det(\mc{Q})^{\oo d}.
\end{equation}
By Lemma~\ref{lem:Jxpskew} (and the notation thereof) we get
\[
\begin{aligned}
&\scpr{\Ext^{\bullet}_S(J_{\x,p}^{skew},S)}{S_{\ll(\mf{B}_s)}W} \overset{(\ref{eq:basicrelns-skew1})}{=} \scpr{H^{{n\choose 2}-{2p\choose 2}-\bullet}(\bb{G},\V\oo\S^{\vee})^*\oo\det(W)^{\oo(n-1)}}{\det(W)^{\oo(2s)}} \\
&\overset{\rm{tensor with }\det(W)^{\oo(-2s)}}{=} \scpr{H^{{n\choose 2}-{2p\choose 2}-\bullet}(\bb{G},\V\oo\S^{\vee})^*\oo\det(W)^{\oo(n-1-2s)}}{\bb{C}} \\
&\overset{\rm{dualize}}{=}\scpr{H^{{n\choose 2}-{2p\choose 2}-\bullet}(\bb{G},\V\oo\S^{\vee})\oo\det(V)^{\oo(n-1-2s)}}{\bb{C}} \overset{(\ref{eq:bfromy-skew}),(\ref{eq:defH*-skew}),(\ref{eq:basicrelns-skew2})}{=}\scpr{H^{\bullet}}{\bb{C}}.
\end{aligned}
\]

\ul{Proof of Claim 2:} We have
\[\det(\mc{Q})^{\oo(d+n-2s-2p)}\oo\Sym(\bw^2\mc{Q}^*)=\bigoplus_{\a\in\mc{A}_s} S_{\a}\mc{Q},\]
and therefore
\[H^{\bullet}=\bigoplus_{\a\in\mc{A}_s}H^{{n\choose 2}-{2p\choose 2}-\bullet}(\bb{G},S_{\b}\mc{Q}\oo S_{\a}\mc{R}).\]
The asserted conclusion now follows from Theorem~\ref{thm:bott}(a).

\ul{Proof of Claim 3:} Assume first that $(\b')^*=\a\in\mc{A}_s$. Since $\a_{2i-1}=\a_{2i}$ for $i=1,\cdots,p$, we get that all $\b_i$ are even. Since $\b\in\P(n-2p,2p)$, we have $d+n-1-2s=\b_1\leq 2p$. If the inequality is strict, then $\b'_{2p}=0$ and therefore $\a_1=-\b'_{2p}=0$. Since $\a\in\mc{A}_s$, we get $0=\a_1\leq d+n-2s-2p$, which then implies $d+n-2s\geq 2p$. Since $\b_1$ is even and $\b_1<2p$, we get $2p-2\geq\b_1=d+n-1-2s$, i.e. $2p\geq d+n+1-2s$, contradicting the inequality $d+n-2s\geq 2p$.

Assume now that $d=2s+2p-n+1$, and that all $\b_i$ are even. $\mc{A}_s$ simplifies to
\[\mc{A}_s=\{\a\in\bb{Z}^{2p}_{dom}:\a_{2i-1}=\a_{2i}\rm{ for }i=1,\cdots,p,\rm{ and }\a_1\leq 1\}.\]
Let $\a=(\b')^*$. Since the $\b_i$ are even, we get $\a_{2i-1}=\a_{2i}$ for $i=1,\cdots,p$. Since $\a_1=-\b'_{2p}\leq 0$, the inequality $\a_1\leq 1$ is trivially satisfied, proving that $(\b')^*\in\mc{A}_s$.
\end{proof}

\section{Symmetric matrices}\label{sec:symm}

In this section $W=\bb{C}^n$, $V=W^*$, and $S=\Sym(\Sym^2 V)$ is the coordinate ring of the vector space $\Sym^2 W$ of $n\times n$ symmetric matrices.

\begin{theorem}\label{thm:symm}
We fix $0\leq p<n$, $n-p\leq s\leq n$. Using the notation (\ref{eq:defqbinomial}), (\ref{eq:genfcngeneric}), (\ref{eq:defllCsj}) and (\ref{eq:defJpsymm}), we have
 \[\scpr{\Ext^{\bullet}_S(J_p^{symm},S)}{S_{\ll(\mf{C}_s^j)}W}=
 \begin{cases}
  \displaystyle q^{1+{s+1\choose 2}-{s-(n-p)+2\choose 2}}\cdot{\lfloor\frac{s-1}{2}\rfloor\choose\frac{s-(n-p)}{2}}_{q^{-4}} & \rm{if }s\equiv n-p\ (\rm{mod }2),\rm{ and} \\
  & j\equiv s\ (\rm{mod }2)\rm{ when }s<n;\\
  0 & \rm{otherwise}.
 \end{cases}
 \]
\end{theorem}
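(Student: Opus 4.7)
The strategy is to adapt the three-claim structure of the proof of Theorem~\ref{thm:skew} to the symmetric setting, working term-by-term in the decomposition~(\ref{eq:defJpsymm}) of $J_p^{symm}$. For each $\y\in\P(n)$ with $y_1=\cdots=y_{p+1}=d$ I set $\x=2\y$, split $\x=(\x^1,\x^2)$ as in Lemma~\ref{lem:Jxpsymm} (so $\x^1$ has length $p$ with all entries equal to $2d$), and let $\b=\x^2+((n-s)^{n-p})$, so that $\b_1=2d+n-s$ and in general $\b_i=2y_{p+i}+n-s$. Combining Lemma~\ref{lem:Jxpsymm} with the identities
\[\det(\Sym^2 W)=\det(W)^{\oo(n+1)},\quad \det(\Sym^2\mc{Q}^*)=\det(\mc{Q})^{\oo-(p+1)},\quad \det(V)\oo\mc{O}_{\bb{G}}=\det(\mc{Q})\oo\det(\mc{R}),\]
one establishes the analog of Claim~1:
\[\scpr{\Ext^{\bullet}_S(J^{symm}_{\x,p},S)}{S_{\ll(\mf{C}_s^j)}W}=\scpr{H^{{n+1\choose 2}-{p+1\choose 2}-\bullet}\bigl(\bb{G},\,S_\b\mc{R}\oo\det(\mc{Q})^{\oo c}\oo\Sym(\Sym^2\mc{Q}^*)\bigr)}{\mc{E}_j},\]
on $\bb{G}=\bb{G}(p,V)$, with $c=2d+n-s-p-1$, where $\mc{E}_j=\bb{C}$ for $j=1$ (since $S_{\ll(\mf{C}_s^1)}W=\det(W)^{\oo(s+1)}$), while $\mc{E}_j=\bw^{n-s}W$ for $j=2$ (since, after absorbing the determinantal twist, the factor conjugate to $S_{\ll(\mf{C}_s^2)}W\simeq\bw^s W\oo\det(W)^{\oo s}$ becomes $\bw^{n-s}W$).

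The analog of Claim~2 then decomposes $\det(\mc{Q})^{\oo c}\oo\Sym(\Sym^2\mc{Q}^*)=\bigoplus_{\a\in\mc{A}_s}S_\a\mc{Q}$ for
\[\mc{A}_s=\{\a\in\bb{Z}^p_{dom}:\a_i\le c\text{ and }\a_i\equiv c\pmod{2}\text{ for all }i\},\]
and invokes Theorem~\ref{thm:bott}(a) when $j=1$ and Theorem~\ref{thm:bott}(b) when $j=2$ (the hypotheses $n-p\le s\le n$ and $\b_i\ge n-s$ of part~(b) hold automatically by construction). This reduces non-vanishing to the requirement $\b\in\P(n-p,p)$ together with $(\b')^*\in\mc{A}_s$ (for $j=1$) or $(\b')^*+(0^{s+p-n},-1^{n-s})\in\mc{A}_s$ (for $j=2$), with each non-vanishing contribution equal to $q^{{n+1\choose 2}-{p+1\choose 2}-|\b|}$. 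This is precisely where part~(b) of Theorem~\ref{thm:bott} is needed, justifying its inclusion in Section~\ref{subsec:repthybott}.

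The analog of Claim~3 translates these conditions into conditions on $\y$. In both cases the bound $\a_i\le c$ combined with $\b\in\P(n-p,p)$ forces $\b_1=p$, hence $d=(s-(n-p))/2$ is uniquely determined; and the parity condition on $\mc{A}_s$ imposes parity constraints on the multiplicities of the values appearing in $\b$ (uniform parity of $\b'_j$ for $j=1,\cdots,p$ in Case $j=1$, and a shift in parity between $\b'_{n-s}$ and $\b'_{n-s+1}$ in Case $j=2$). These constraints enforce the congruences $s\equiv n-p\pmod 2$ and $j\equiv s\pmod 2$ (when $s<n$), and establish an explicit combinatorial bijection between the admissible $\y$'s and partitions $\z\in\P(\lfloor(s-1)/2\rfloor-(s-(n-p))/2,\,(s-(n-p))/2)$ under which $|\b|$ takes the form (constant)~$+~4|\z|$. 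Summing $\sum_\z q^{-4|\z|}$ via~(\ref{eq:genfunqbin}) applied with $q^{-4}$ yields the $q$-binomial ${\lfloor(s-1)/2\rfloor\choose(s-(n-p))/2}_{q^{-4}}$, while the constant part together with $N={n+1\choose 2}-{p+1\choose 2}$ gives the prefactor $q^{1+{s+1\choose 2}-{s-(n-p)+2\choose 2}}$. Finally, (\ref{eq:Hpqsymm}) follows from Theorem~\ref{thm:symm} by the witness-weight argument: the parity obstruction $i-j\equiv n-p\pmod 2$ on the $E_2$-page of the spectral sequence~(\ref{eq:specseqgeneric}) rules out cancellations among copies of $S_{\ll(\mf{C}_s^j)}W$, even though the spectral sequence is not degenerate (Remark~\ref{rem:nondegeneratesseq}).

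The main obstacle is the analog of Claim~3. Unlike the skew case, where a single uniform parity on $(\b')^*$ forces every $\b_i$ to be even, the symmetric case requires distinguishing between the interior parts $\b'_j$ (for $j<p$) and the boundary part $\b'_p=$ multiplicity of $p$ in $\b$, whose parity directly detects $c$; and in Case $j=2$, a further parity jump at $j=n-s$ must be tracked. Pinning down the correct bijection with the rectangle $\P(\lfloor(s-1)/2\rfloor-(s-(n-p))/2,\,(s-(n-p))/2)$, as well as verifying that the baseline exponent simplifies to $1+{s+1\choose 2}-{s-(n-p)+2\choose 2}$, constitutes the main combinatorial work.
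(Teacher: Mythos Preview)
Your plan is correct and follows essentially the same three-claim structure as the paper's own proof: the same definitions of $\b$, $c$, and $\mc{A}_s$, the same application of Theorem~\ref{thm:bott}(a) for $j=1$ and (b) for $j=2$, and an equivalent formulation of Claim~3 (your ``uniform parity of $\b'_j$'' versus ``shift at $j=n-s$'' is exactly the paper's ``$\b'_i$ odd for $i>n-s$ and $n-p$ odd/even'', since $\b'_1=\cdots=\b'_{n-s}=n-p$ automatically). One small caveat on the final paragraph: the assertion that the $\Ext$-degrees satisfy $i-j\equiv n-p\pmod 2$ is not literally correct (the actual residue depends on both $s$ and $n-p$), but what matters for the spectral-sequence argument---and what does hold---is that for each fixed witness weight all contributing degrees are congruent mod~$4$, hence mod~$2$, so no cancellations occur.
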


The identity (\ref{eq:Hpqsymm}) follows from Theorem~\ref{thm:symm} by the same reasoning for which (\ref{eq:Hpqskew}) was a consequence of Theorem~\ref{thm:skew}. We leave the details to the interested reader, and focus on the proof of Theorem~\ref{thm:symm}.

\begin{proof}[Proof of Theorem~\ref{thm:symm}]
We begin by fixing some notation. For $d\geq 0$ we consider a partition $\y\in\P(n)$ with $y_1=\cdots=y_{p+1}=d$. We let $\x=2\cdot\y\in\mc{P}(n)$ and write $\x$ as the concatenation of $\x^1=((2d)^p)\in\P(p)$ and $\x^2\in\P(n-p)$. We consider the partition $\b=\x^2+((n-s)^{n-p})\in\P(n-p)$, so that
\begin{equation}\label{eq:bfromy-symm}
\b_1=2d+n-s,\ \b_i=2y_{p+i}+n-s\rm{ for }i=2,\cdots,n-p. 
\end{equation}
For $s=n-p,\cdots,n$ we consider the collection of dominant weights
\begin{equation}\label{eq:defAs-symm}
\mc{A}_s=\{\a\in\bb{Z}^p_{dom}:\a_i\equiv n-s-p-1\ (\rm{mod }2),\a_1\leq 2d+n-s-p-1\}, 
\end{equation}
We define the cohomologically graded module
\begin{equation}\label{eq:defH*-symm}
H^{\bullet}=H^{{n+1\choose 2}-{p+1\choose 2}-\bullet}(\bb{G},S_{\b}\mc{R}\oo\det(\mc{Q})^{\oo(2d+n-s-p-1)}\oo\Sym(\Sym^2\mc{Q}^*)) 
\end{equation}
with $\bb{G},\mc{Q},\mc{R}$ as in Lemma~\ref{lem:Jxpsymm}. Our proof is based on a number of claims which we explain at the end:

\ul{Claim 1:} For $J_{\x,p}^{symm}$ as in Lemma~\ref{lem:Jxpsymm} and $s=n-p,\cdots,n$,
\[
\scpr{\Ext^{\bullet}_S(J_{\x,p}^{symm},S)}{S_{\ll(\mf{C}_s^j)}W} = \begin{cases}
\scpr{H^{\bullet}}{\bb{C}} & \rm{if }j=1, \\
\scpr{H^{\bullet}}{\bw^{n-s}W} & \rm{if }j=2.
\end{cases}
\]

\ul{Claim 2:} For $\b$ satisfying (\ref{eq:bfromy-symm}), we have
\[\scpr{H^{\bullet}}{\bb{C}}=\begin{cases}
 q^{{n+1\choose 2}-{p+1\choose 2}-|\b|} & \rm{if }\b\in\P(n-p,p)\rm{ and }(\b')^*\in\mc{A}_s, \\
 0 & \rm{otherwise}.
\end{cases}\]

\[\scpr{H^{\bullet}}{\bw^{n-s}W}=\begin{cases}
 q^{{n+1\choose 2}-{p+1\choose 2}-|\b|} & \rm{if }\b\in\P(n-p,p)\rm{ and }(\b')^*+(0^{s-n+p},-1^{n-s})\in\mc{A}_s, \\
 0 & \rm{otherwise}.
\end{cases}\]

\ul{Claim 3:} For $\b\in\P(n-p,p)$ satisfying (\ref{eq:bfromy-symm}), we have the equivalences

\[(\b')^*\in\mc{A}_s\Longleftrightarrow\begin{cases}
2d=s+p-n, \\
\b'_i\rm{ odd for }i=n-s+1,\cdots,p,\\
n-p\rm{ is odd when }s<n.
\end{cases}
\]

\[(\b')^*+(0^{s-n+p},-1^{n-s})\in\mc{A}_s\Longleftrightarrow\begin{cases}
2d=s+p-n, \\
\b'_i\rm{ odd for }i=n-s+1,\cdots,p,\\
n-p\rm{ is even when }s<n.
\end{cases}
\]

We now explain the proof of the theorem based on Claims~1--3. The condition in Claim~3 that $\b'_i$ is odd for $i=n-s+1,\cdots,p$ is equivalent with the following condition on the original partition $\y$:
\[y_{p+2}=y_{p+3},y_{p+4}=y_{p+5},\cdots,\rm{ and moreover }y_n=0\rm{ if }n-p-1\rm{ is odd.}\]
This is equivalent to the existence of a partition $\z\in\P(\lfloor(n-p-1)/2\rfloor)$ such that $(y_{p+2},\cdots,y_n)=\z^{(2)}$. In fact, the non-trivial contributions to $H^{\bullet}$ in Claim~2 occur when $(s+p-n)/2=d=y_{p+1}\geq y_{p+2}\geq\cdots$, i.e. for $\z\in\P(\lfloor(n-p-1)/2\rfloor,(s+p-n)/2)$. Note that $|\b|$ and $|\z|$ are then related via
\begin{equation}\label{eq:bfromz}
|\b|=(n-s)\cdot(n-p)+2d+2(y_{p+2}+\cdots+y_n)=(n-s)\cdot(n-p)+(s+p-n)+4|\z|. 
\end{equation}
Combining Claims~1--3 with (\ref{eq:bfromz}) we get that the following equality holds when $s+p-n\geq 0$ is even, and any of the following conditions (a)--(c) holds: (a)~$j=1$ and $n-p$ is odd; (b)~$j=2$ and $n-p$ is even; (c)~$s=n$, independently on the parity of $j,n-p$ (note that this is compatible with the equality $\mf{C}_n^1=\mf{C}_n^2$).
\[
\begin{aligned}
\scpr{\Ext^{\bullet}_S(J_{p}^{symm},S)}{S_{\ll(\mf{C}_s^j)}W} &= \sum_{\z\in\P(\lfloor(n-p-1)/2\rfloor,(s+p-n)/2)}q^{{n+1\choose 2}-{p+1\choose 2}-(n-s)\cdot(n-p)-(s+p-n)-4|\z|} \\
&\overset{(\ref{eq:genfunqbin})}{=} q^{{n+1\choose 2}-{p+1\choose 2}-(n-s)\cdot(n-p)-(s+p-n)}\cdot{\lfloor\frac{s-1}{2}\rfloor \choose \frac{s-(n-p)}{2}}_{q^{-4}} \\
\end{aligned}
\]
The formula asserted in the statement of the theorem now follows by observing that 
\[{n+1\choose 2}-{p+1\choose 2}-(n-s)\cdot(n-p)-(s+p-n)=1+{s+1\choose 2}-{s-(n-p)+2\choose 2}.\]
To finish the proof of the theorem, it remains to explain Claims~1--3.

\ul{Proof of Claim 1:} The following easy identities will be useful next:
\begin{equation}\label{eq:basicrelns-symm}
\det\left(\Sym^2 W\right)=\det(W)^{\oo(n+1)},\quad\det(\Sym^2\mc{Q}^*)=\det(\mc{Q})^{\oo(-p-1)},\quad\det(V)\oo\mc{O}_{\bb{G}}=\det(\mc{Q})\oo\det(\mc{R}). 
\end{equation}
By Lemma~\ref{lem:Jxpsymm} (and the notation thereof) we get
\[
\begin{aligned}
&\scpr{\Ext^{\bullet}_S(J_{\x,p}^{symm},S)}{S_{\ll(\mf{C}_s^j)}W} \overset{(\ref{eq:basicrelns-symm})}{=} \scpr{H^{{n+1\choose 2}-{p+1\choose 2}-\bullet}(\bb{G},\V\oo\S^{\vee})^*\oo\det(W)^{\oo(n+1)}}{S_{\ll(\mf{C}_s^j)}W} \\
&\overset{\rm{tensor with }\det(W)^{\oo(-s-1)}}{=} \scpr{H^{{n+1\choose 2}-{p+1\choose 2}-\bullet}(\bb{G},\V\oo\S^{\vee})^*\oo\det(W)^{\oo(n-s)}}{S_{\ll(\mf{C}_s^j)-((s+1)^n)}W} \\
&\overset{\rm{dualize}}{=}\scpr{H^{{n+1\choose 2}-{p+1\choose 2}-\bullet}(\bb{G},\V\oo\S^{\vee})\oo\det(V)^{\oo(n-s)}}{S_{\ll(\mf{C}_s^j)-((s+1)^n)}V} \overset{(\ref{eq:bfromy-symm}),(\ref{eq:defH*-symm}),(\ref{eq:basicrelns-symm})}{=}\scpr{H^{\bullet}}{S_{\ll(\mf{C}_s^j)-((s+1)^n)}V}
\end{aligned}
\]
For $j=1$ we have $\ll(\mf{C}_s^1)=(s+1)^n$, so $S_{\ll(\mf{C}_s^1)-((s+1)^n)}V=\bb{C}$. For $j=2$ we have $\ll(\mf{C}_s^2)-((s+1)^n)=(0^s,-1^{n-s})$ so $S_{\ll(\mf{C}_s^j)-((s+1)^n)}V=\bw^{n-s}W$.

\ul{Proof of Claim 2:} We have
\[\det(\mc{Q})^{\oo(2d+n-s-p-1)}\oo\Sym(\Sym^2\mc{Q}^*)=\bigoplus_{\a\in\mc{A}_s} S_{\a}\mc{Q},\]
and therefore
\[H^{\bullet}=\bigoplus_{\a\in\mc{A}_s}H^{{n+1\choose 2}-{p+1\choose 2}-\bullet}(\bb{G},S_{\b}\mc{Q}\oo S_{\a}\mc{R}).\]
The asserted conclusions now follow from Theorem~\ref{thm:bott}.

\ul{Proof of Claim 3:} Let us first prove that either of the assumptions $(\b')^*\in\mc{A}_s$ or $(\b')^*+(0^{s-n+p},-1^{n-s})$ implies that $2d=s+p-n$, or equivalently $\b_1=p$. Suppose otherwise that $\b_1<p$, or equivalently that $\b'_p=0$, and that $(\b')^*=\a$ or $(\b')^*+(0^{s-n+p},-1^{n-s})=\a$ for some $\a\in\mc{A}_s$. We get in either case that $0=-\b'_p=\a_1\leq 2d+n-s-p-1$, i.e. $p+1\leq 2d+n-s=\b_1$, a contradiction. To prove the equivalences in Claim~3 we can then assume that $2d=s+p-n$ (and $\b_1=p$, $\b'_p\geq 1$). Formula (\ref{eq:defAs-symm}) then simplifies to
\[\mc{A}_s=\{\a\in\bb{Z}^p_{dom}:\a_i\rm{ odd},\a_1\leq -1\}.\]
Let us now observe that
\[(\b')^*=(-\b'_p,\cdots,-\b'_{n-s+1},(p-n)^{n-s}),\]
so the first entry in $(\b')^*$ resp. $(\b')^*+(0^{s-n+p},-1^{n-s})$ is at most $-1$. Containment in $\mc{A}_s$ for either of the two weights $(\b')^*$ or $(\b')^*+(0^{s-n+p},-1^{n-s})$ is then equivalent to them having only odd entries. We get $(\b')^*\in\mc{A}_s$ if and only if $\b'_i$ is odd for $i=n-s+1,\cdots,p$, and $n-p$ is odd when $s<n$. Also, $(\b')^*+(0^{s-n+p},-1^{n-s})\in\mc{A}_s$ if and only if $\b'_i$ is odd for $i=n-s+1,\cdots,p$, and $n-p$ is even when~$s<n$.
\end{proof}

\section*{Acknowledgments} 
We would like to thank Gennady Lyubeznik, Anurag Singh, and Uli Walther for interesting conversations and helpful suggestions. Experiments with the computer algebra software Macaulay2 \cite{M2} have provided numerous valuable insights. The first author acknowledges the support of the National Science Foundation Grant No.~1458715. The second author acknowledges the support of the Alexander von Humboldt Foundation, and of the National Science Foundation Grant No.~1400740.


\begin{bibdiv}
\begin{biblist}

\bib{abeasis}{article}{
   author={Abeasis, Silvana},
   title={The ${\rm GL}(V)$-invariant ideals in $S(S^{2}V)$},
   language={Italian, with English summary},
   journal={Rend. Mat. (6)},
   volume={13},
   date={1980},
   number={2},
   pages={235--262},
   issn={0034-4427},
   review={\MR{602662 (82f:15022)}},
}

\bib{abeasis-delfra}{article}{
   author={Abeasis, S.},
   author={Del Fra, A.},
   title={Young diagrams and ideals of Pfaffians},
   journal={Adv. in Math.},
   volume={35},
   date={1980},
   number={2},
   pages={158--178},
   issn={0001-8708},
   review={\MR{560133 (83f:14040)}},
   doi={10.1016/0001-8708(80)90046-8},
}

\bib{barile}{article}{
   author={Barile, Margherita},
   title={Arithmetical ranks of ideals associated to symmetric and
   alternating matrices},
   journal={J. Algebra},
   volume={176},
   date={1995},
   number={1},
   pages={59--82},
   issn={0021-8693},
   review={\MR{1345294 (96h:13027)}},
   doi={10.1006/jabr.1995.1233},
}

\bib{bru-sch}{article}{
   author={Bruns, Winfried},
   author={Schw{\"a}nzl, Roland},
   title={The number of equations defining a determinantal variety},
   journal={Bull. London Math. Soc.},
   volume={22},
   date={1990},
   number={5},
   pages={439--445},
   issn={0024-6093},
   review={\MR{1082012 (91k:14035)}},
   doi={10.1112/blms/22.5.439},
}

\bib{DCEP}{article}{
   author={De Concini, C.},
   author={Eisenbud, David},
   author={Procesi, C.},
   title={Young diagrams and determinantal varieties},
   journal={Invent. Math.},
   volume={56},
   date={1980},
   number={2},
   pages={129--165},
   issn={0020-9910},
   review={\MR{558865 (81m:14034)}},
   doi={10.1007/BF01392548},
}

\bib{M2}{article}{
        author = {Grayson, Daniel R.},
        author = {Stillman, Michael E.},
        title = {Macaulay 2, a software system for research in algebraic geometry},
        journal = {Available at \url{http://www.math.uiuc.edu/Macaulay2/}}
}

\bib{lyubeznik-singh-walther}{article}{
   author={Lyubeznik, Gennady},
   author={Singh, Anurag},
   author={Walther, Uli},
   title={Local cohomology modules supported at determinantal ideals},
   journal = {arXiv},
   number = {1308.4182},
   date={2013}
}

\bib{raicu-VeroDmods}{article}{
        author = {Raicu, Claudiu},
        title = {Characters of equivariant $\D$-modules on Veronese cones},
	journal = {arXiv},
	number = {1412.8148},
	note = {To appear in Trans. AMS},
	date={2014}
}

\bib{raicu-dmods}{article}{
        author = {Raicu, Claudiu},
        title = {Characters of equivariant $\D$-modules on spaces of matrices},
	journal = {arXiv},
	number = {1507.06621},
	date={2015}
}

\bib{raicu-weyman}{article}{
   author={Raicu, Claudiu},
   author={Weyman, Jerzy},
   title={Local cohomology with support in generic determinantal ideals},
   journal={Algebra \& Number Theory},
   volume={8},
   date={2014},
   number={5},
   pages={1231--1257},
   issn={1937-0652},
   review={\MR{3263142}},
   doi={10.2140/ant.2014.8.1231},
}

\bib{raicu-weyman-witt}{article}{
   author={Raicu, Claudiu},
   author={Weyman, Jerzy},
   author={Witt, Emily E.},
   title={Local cohomology with support in ideals of maximal minors and
   sub-maximal Pfaffians},
   journal={Adv. Math.},
   volume={250},
   date={2014},
   pages={596--610},
   issn={0001-8708},
   review={\MR{3122178}},
   doi={10.1016/j.aim.2013.10.005},
}

\bib{VdB:loccoh}{article}{
   author={Van den Bergh, Michel},
   title={Local cohomology of modules of covariants},
   journal={Adv. Math.},
   volume={144},
   date={1999},
   number={2},
   pages={161--220},
   issn={0001-8708},
   review={\MR{1695237 (2000d:14051)}},
   doi={10.1006/aima.1998.1809},
}

\bib{weyman}{book}{
   author={Weyman, Jerzy},
   title={Cohomology of vector bundles and syzygies},
   series={Cambridge Tracts in Mathematics},
   volume={149},
   publisher={Cambridge University Press},
   place={Cambridge},
   date={2003},
   pages={xiv+371},
   isbn={0-521-62197-6},
   review={\MR{1988690 (2004d:13020)}},
   doi={10.1017/CBO9780511546556},
}

\end{biblist}
\end{bibdiv}

\end{document}